\newtheorem{theorem}{Theorem}[section]
\newtheorem{pr}[theorem]{Proposition}
\newtheorem{co}[theorem]{Corollary}
\newtheorem{lm}[theorem]{Lemma}
\theoremstyle{definition}
\newtheorem{remark}[theorem]{Remark}
\theoremstyle{definition}
\numberwithin{equation}{section}
\newcommand{\Pic}[1]{{\rm Pic}(#1)}
\newcommand{\Nef}[1]{{\rm Nef}(#1)}
\newcommand{\proj}[1]{\mathbb P^{#1}}
\newcommand{\Proj}{\mathbb P}
\newcommand{\bigO}[1]{\mathcal O_{#1}}
\newcommand{\normal}[2]{\mathcal N_{#1 / #2}}
\newcommand{\hodge}[3]{h^{#1,#2} (#3)}
\newcommand{\NE}[1]{{\rm NE}(#1)}
\title{Fano 4-folds having a prime divisor of Picard number 1}
\author{Saverio Andrea Secci}
\date{}
\subjclass[2020]{14J45, 14J35, 14E30.}
\address{Università di Torino, Dipartimento di Matematica, via Carlo Alberto 10, 10123 Torino - Italy}
\email{saverioandrea.secci@unito.it}
\begin{document}

\begin{abstract}
We prove a classification result for smooth complex Fano fourfolds of Picard number 3 having a prime divisor of Picard number 1, after a characterisation result in arbitrary dimension by Casagrande and Druel \cite{CD}. These varieties are obtained by blowing-up a $\mathbb P^1$-bundle over a smooth Fano variety of Picard number 1 along a codimension 2 subvariety. We study in detail the case of dimension 4, and show that they form 28 families. We compute the main numerical invariants, determine the base locus of the anticanonical system, and study their deformations to give an upper bound to the dimension of the base of the Kuranishi family of a general member.
\end{abstract}

\maketitle

\section{Introduction}
Fano varieties (over $\mathbb C$) have been thoroughly classified up to dimension 3, and are widely studied also in higher dimensions. They play an important role in the Minimal Model Program, since they appear as the \emph{general fibres of Mori fibre spaces}, one of the possible outcomes in the run of the Minimal Model Program. Although many results on the structure of Fano varieties have already been proven, we are still lacking a complete classification of Fano varieties starting from dimension 4.

Let $X$ be a smooth projective variety of dimension $n$. We denote by $\mathcal N_1(X)$ the real vector space of one-cycles with real coefficients, modulo numerical equivalence. Its dimension is the Picard number $\rho_X$.  Recall that $X$ is Fano if its anticanonical divisor $-K_X$ is ample.

In this paper we focus on Fano varieties of dimension $n \geq 3$, admitting a prime divisor $D$ such that $\rho_D=1$: in \cite[Proposition 5]{Tsu}, Tsukioka shows that if this condition is satisfied, then the Picard number of $X$ is at most 3. Later on, these varieties have been studied also by Casagrande and Druel in \cite{CD}. To be more precise, for any prime divisor $i \colon D \hookrightarrow X$, we denote by $\mathcal N_1(D,X)$ the image of the push-forward $i_* \colon \mathcal N_1(D) \to \mathcal N_1(X)$ induced by the inclusion, which is generated by the classes in $\mathcal N_1(X)$ of curves contained in $D$. Note that $0 < \dim \mathcal N_1(D,X) \leq \rho_D$. Let us assume that there exists a prime divisor $D$ such that $\dim \mathcal N_1(D,X)=1$: then, in \cite[Lemma 3.1]{CD}, the authors prove that either $\rho_X=1$, or $2 \leq \rho_X \leq 3$ and $X$ has some contraction onto a smooth Fano variety of lower Picard number. This contraction is either a blow-up or a conic bundle if $\rho_X=2$, and it is a conic bundle of relative Picard number 2 if $\rho_X=3$. Furthermore, in \cite[Theorem 3.8]{CD}, the authors give a general characterisation of $X$ when $\rho_X = 3$ (see Remark \ref{re1}).

Our goal in this paper is to classify all families of Fano fourfolds with $\rho_X=3$ containing a prime divisor $D$ with $\rho_D=1$, or more generally with $\dim \mathcal N_1(D,X)=1$. We compute the main numerical invariants of $X$, study the base locus of the linear system $|-K_X|$, and we also try to address the question of its rationality.

Recall that the index of a Fano variety, denoted by $i_X$, is the greatest positive integer $r$ such that the anticanonical divisor $-K_X$ is linearly equivalent to $rD$ for some ample Cartier divisor $D$ in $X$.

\begin{theorem}
\label{thm1}
There are $28$ families of smooth Fano fourfolds $X$ with $\rho_X=3$ having a prime divisor $D$ such that $\dim \mathcal N_1(D,X)=1$. They have index $i_X=1$.

Among them, $22$ families consist of rational varieties, and the very general variety of $4$ other families is not rational.

Furthermore, the linear system $|-K_X|$ is free, with the exception of two families where the base locus consists of either $1$ or $2$ points. In all cases, a general element of $|-K_X|$ is smooth.

The families with their numerical invariants and properties are described in Table $2$.
\end{theorem}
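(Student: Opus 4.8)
The plan is to build on the structural results of Casagrande and Druel. Since $\rho_X = 3$, \cite[Lemma 3.1]{CD} already tells us that $X$ has a contraction onto a smooth Fano variety of smaller Picard number, and the finer description of $X$ (together with its relevant contractions and the divisor $D$) given in \cite[Theorem 3.8]{CD} reduces the classification to a bounded amount of low-dimensional data: Fano threefolds, del Pezzo and other rational surfaces of Picard number at most $2$, an auxiliary very ample or nef line bundle, or a smooth blow-up centre --- all of which are themselves classified. So the first step is to recall that description verbatim and specialise every piece of it to the case $\dim X = 4$.

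The second step is the enumeration. Running through the finitely many admissible choices of building data, and imposing at each stage that the resulting fourfold is \emph{smooth}, that $-K_X$ is ample, that $\rho_X = 3$, and that $X$ indeed contains a prime divisor $D$ with $\dim \mathcal N_1(D,X) = 1$, cuts the list down to a finite set of candidate families, which we claim has exactly $28$ members. For each survivor I would then fix an explicit model --- an iterated smooth blow-up, a projectivised vector bundle over a Fano threefold or over a surface, a prime divisor inside such a bundle, or a (complete intersection of) hypersurface(s) in a product of projective spaces or in a bundle --- and on that model verify smoothness and the Fano condition by hand, compute $(-K_X)^4$, $h^0(X, -K_X)$, the Hodge numbers $h^{p,q}(X)$, and the remaining invariants recorded in Table~2. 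The equality $i_X = 1$ should come out of each model directly, since in every case $-K_X$ is primitive in $\Pic{X}$.

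The third step deals with $|-K_X|$ and with rationality. For the base locus: on each of the $26$ generic families one writes $-K_X$ as a sum of base-point-free classes (or checks freeness on the explicit model) to conclude that $|-K_X|$ is free, while for the two exceptional families one computes $\mathrm{Bs}\,|-K_X|$ on the model and checks that it is a reduced zero-dimensional scheme of length $1$ or $2$; Bertini's theorem then yields a smooth general anticanonical element in all $28$ cases. For rationality, the $22$ rational families are rational essentially by construction --- being blow-ups of rational varieties, or bundles over rational bases, or suitably simple hypersurfaces. For the remaining four families the very general member carries, through one of its elementary contractions, a conic-bundle or a del Pezzo fibration structure (or it contains a non-rational prime divisor, e.g.\ a smooth cubic threefold or a quartic double solid), and one invokes the corresponding known irrationality statements --- proved by intermediate-Jacobian / Clemens--Griffiths arguments or by degeneration and decomposition-of-the-diagonal arguments --- after verifying that their hypotheses are satisfied by the very general $X$ in the family.

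The step I expect to be the main obstacle is twofold. First, \emph{completeness} of the enumeration: one must be certain that no admissible configuration of building data has been missed, and that the positivity and smoothness constraints have been imposed correctly, so that the count is really $28$ and not more. Second, the \emph{non-rationality} of the four exceptional families: in contrast to the rest of the argument, which is essentially bookkeeping over a classified list, establishing irrationality of the very general member requires non-trivial external input and a careful matching of hypotheses with the cited theorems.
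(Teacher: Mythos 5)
Your overall strategy is the paper's: invoke \cite[Theorem 3.8]{CD} to reduce every such $X$ to explicit data (in fact a triple $(Z,a,d)$ with $Z$ a Fano 3-fold of Picard number $1$ and index $\geq 2$, $X$ being a blow-up of $\Proj(\bigO Z\oplus\bigO Z(a))$ along a surface lying over a smooth member of $|dH|$), enumerate the admissible data to get the $28$ families, compute the invariants on the model, read off $i_X=1$ from an explicit curve of anticanonical degree $1$, and treat ${\rm Bs}(|-K_X|)$ by decomposing $-K_X$ into effective pieces. Up to that point the proposal is sound. The genuine gap is in the rationality step. All $X$ in these families are birational to $Z\times\proj 1$, so proving that $X$ is \emph{not} rational amounts to ruling out rationality of $Z\times\proj 1$, i.e.\ it requires (a consequence of) \emph{stable} non-rationality of $Z$ --- not mere non-rationality. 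The tools you propose do not reach this: an intermediate-Jacobian/Clemens--Griffiths obstruction for $Z$ (or a conic-bundle or del Pezzo-fibration structure analysed via such 3-fold techniques) only excludes rationality of $Z$ itself and says nothing about $Z\times\proj 1$; and ``$X$ contains a non-rational prime divisor'' proves nothing, since a rational 4-fold can perfectly well contain non-rational divisors (blow up a rational 4-fold along a non-rational surface). The paper's argument is precisely the degeneration/decomposition-of-the-diagonal route you mention only in passing: by \cite{HT} the very general $Z_1$ (sextic in $\Proj(1,1,1,2,3)$) and $Z_2$ (quartic double solid) are not stably rational, hence $Z_i\times\proj 1$ and therefore the very general $X^i_{a,d}$, $i=1,2$, are not rational --- these are the $4$ non-rational families.

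This also explains a count your plan glosses over: $22+4=26$, not $28$. The two families over the cubic threefold $Z_3$ are left undecided in the theorem, exactly because the cubic threefold is non-rational (Clemens--Griffiths) but its stable rationality is open; your proposed criteria would spuriously ``prove'' their non-rationality too, which is a sign the method is not correct as stated. A second, smaller gap: Bertini only gives smoothness of the general anticanonical member away from ${\rm Bs}(|-K_X|)$, so for the two families with base points ($X^1_{0,1}$ and $X^1_{1,2}$) you must additionally exhibit a member of $|-K_X|$ smooth at the base point(s); the paper does this with divisors of the form $\varphi^*D+G+\widehat G$, choosing $D\in|2H|$ avoiding the base point $P_0$ of $|H|$ on $Z_1$.
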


We also study their deformations in order to compute the dimension of the cohomology groups $H^0(\mathcal T_X)$ and $H^1(\mathcal T_X)$ of the tangent sheaf $\mathcal T_X$ of $X$. They correspond, respectively, to the dimension of the automorphism group ${\rm Aut}(X)$ and the dimension of the base of the Kuranishi family of a general member. We recall that deformations of smooth Fano varieties are unobstructed, since all obstruction classes of deformations of $X$ vanish. These classes are defined as elements of the cohomology group $H^2(\mathcal T_X)$, which vanishes as a consequence of the Nakano vanishing theorem \cite[Theorem 4.2.3]{Laz}.

The paper is organised as follows. In Section \ref{section2} we recall some results from \cite[Section 3]{CD}, and we use them to identify the list of 28 families of Fano fourfolds of Theorem \ref{thm1}. In Section \ref{section3} we compute the Hodge numbers, the anticanonical degree $K_X^4$, and the dimension of the space of global sections $H^0(\bigO X(-K_X))$ for all 28 families. In Section \ref{section4} we study the deformations of the varieties of Theorem \ref{thm1} in order to estimate $h^0(\mathcal T_X)$ and $h^1(\mathcal T_X)$. In Section \ref{section5} we study the base locus of the linear system $|-K_X|$, while in Section \ref{section6} we determine whether the Fano fourfolds under consideration are rational or not (when possible). In Section \ref{section7} we display the final tables with all the results of the paper.

Related results appear in \cite{Bat} and \cite{Tsu}: in the former the author provides a complete classification of toric Fano fourfolds, while in the latter the author classifies Fano varieties of dimension $\geq 3$ which admit a negative divisor isomorphic to the projective space. See Section \ref{section7} for the relations with these classifications.

Finally, we fix some notations. Let $X$ be a smooth, complex projective variety: we denote by
\begin{itemize}\setlength\itemsep{.25em}
\item $\mathcal N^1(X)$ the real vector space of divisors with real coefficients, modulo numerical equivalence. It is dual to $\mathcal N_1(X)$;
\item $\NE X \subset \mathcal N_1(X)$ the cone of effective 1-cycles;
\item $\Nef X \subset \mathcal N^1(X)$ the cone of nef divisors. It is dual to $\overline{\NE X}$, the closure of $\NE X$.
\end{itemize}
Recall that, for a smooth Fano variety, linear and numerical equivalence of divisors coincide, and we denote the latter by $\equiv$.

\bigskip

\textbf{Acknowledgements} We would like to thank Cinzia Casagrande for the continuous support received, and Angelo F. Lopez, Stéphane Druel, Eleonora A. Romano for their many comments and useful suggestions.

\section{The 28 families of Fano fourfolds}

\label{section2}
In this section we recall the needed results from \cite[Section 3]{CD}, and use them to determine the 28 families of Fano fourfolds that appear in Theorem \ref{thm1}.

\begin{remark}
\label{re1}
The following is a summary of \cite[Example 3.4 - Theorem 3.8]{CD}, when $n=4$.

Given a smooth Fano threefold $Z$, we begin by constructing from $Z$ a Fano fourfold $X$ with $\rho_X=3$, and containing a divisor with $\rho=1$. Then, by \cite[Theorem 3.8]{CD} we will see that all the varieties in Theorem \ref{thm1} are constructed in this way.

Let $Z$ be a smooth Fano threefold with Picard number $\rho_Z=1$ and index $i_Z$, which is the greatest positive integer $r$ such that $-K_Z\equiv rD$ for some ample divisor $D$ in $\Pic Z$. Also, let $\bigO Z(1)$ be the ample generator of $\Pic Z$, whose linear system $|\bigO Z(1)|$ is non-empty by applying the Riemann-Roch and Kodaira vanishing theorems. Then, take an effective divisor $H \in |\bigO Z(1)|$, so that $-K_Z \equiv i_Z H$. Moreover, fix integers $a \geq 0$ and $d \geq 1$, and assume that the linear system $|\bigO Z(d)|$ contains smooth surfaces. Then, fix such a smooth surface $A \in  |\bigO Z(d)|$.

Let $Y:=\Proj(\bigO Z \oplus \bigO Z(a))$ be the $\proj 1$-bundle with projection $\pi \colon Y \to Z$.
Take a section $G_Y$ with normal bundle $\normal {G_Y} Y \cong \bigO Z(-a)$, corresponding to a surjection $\bigO Z \oplus \bigO Z(a) \twoheadrightarrow \bigO Z$. If $a=0$, then $Y \cong Z \times \proj 1$ and take another section $\widehat G_Y$ disjoint from $G_Y$; if $a > 0$, take a section $\widehat G_Y$ with normal bundle $\normal {\widehat G_Y} Y \cong \bigO Z(a)$, which is also disjoint from $G_Y$, corresponding to a surjection $\bigO Z \oplus \bigO Z(a) \twoheadrightarrow \bigO Z(a)$.

Set $S:=\widehat G_Y \cap \pi^{-1}(A)$, and let $\sigma \colon X \to Y$ be the blow-up with centre $S$. Let us denote by $E$ the exceptional divisor and, by $G$ and $\widehat G$, the transforms of $G_Y$ and $\widehat G_Y$. Note that $G \cap \widehat G = \emptyset$, $\normal G X \cong \bigO Z (-a)$ and $\normal {\widehat G} X \cong \bigO Z (-(d-a))$.

Then $\varphi:=\pi \circ \sigma$ is a conic bundle, that is a fibre-type contraction whose generic fibre is a plane conic, and $\varphi$ admits another factorisation $\varphi:=\widehat \pi \circ \widehat \sigma$, where
\[\widehat \pi \colon \widehat Y := \Proj(\bigO Z \oplus \bigO Z(d-a)) \to Z\]
is a $\proj 1$-bundle, and $\widehat \sigma \colon X \to \widehat Y$ is the blow-up with centre $\widehat S:= \widehat \sigma (G) \cap \widehat \pi ^{-1}(A)$. Denote by $\widehat E$ the exceptional divisor. Also, $\widehat \sigma(G)$ and $\widehat \sigma(\widehat G)$ are disjoint sections of $\widehat \pi$, $\normal {\widehat \sigma (G)}{\widehat Y} \cong \bigO Z(d-a)$ and $\normal {\widehat \sigma (\widehat G)}{\widehat Y} \cong \bigO Z(a-d)$.

\begin{center}
\begin{tikzpicture}
 \matrix (m) [matrix of math nodes,row sep=4em,column sep=4em,minimum width=2em]
  {
     X & Y \\
     \widehat Y & Z \\  };
  \path[-stealth]
    (m-1-1) edge node [above] {$\sigma$} (m-1-2)
    (m-1-2) edge node [right] {$\pi$} (m-2-2)
    (m-1-1) edge node [left] {$\widehat \sigma$} (m-2-1)
    (m-2-1) edge node [below] {$\widehat \pi$} (m-2-2)
    (m-1-1) edge node [above] {$\varphi$} (m-2-2)
    ;
\end{tikzpicture}
\end{center}

Now let $C_Z$ be an irreducible curve in $Z$ such that $\bigO Z(1) \cdot C_Z$ is minimal, $C_G \subset G$ and $C_{\widehat G} \subset \widehat G$ be curves corresponding to $C_Z$, $F \subset E$ and $\widehat F \subset \widehat E$ be fibres of, respectively, $\sigma$ and $\widehat \sigma$. Then the cone of effective one-cycles $\NE X$ is closed and polyhedral, and is generated by the classes of:
\begin{itemize}\setlength\itemsep{.25em}
    \item $F$, $\widehat F$ and $C_{\widehat G}$, if $a=0$;
    \item $F$, $\widehat F$, $C_G$ and $C_{\widehat G}$, if $0<a<d$;
    \item $F$, $\widehat F$ and $C_G$, if $a \geq d$.
\end{itemize}
All the relevant intersection numbers are collected in \cite[Table 1]{CD}, and one can see that
\begin{equation}
\label{eqq1}
dG +a \widehat E \equiv d \widehat G + (d-a)E.
\end{equation}
Furthermore, by \cite[Remark 3.6]{CD}, $X$ is a Fano fourfold if and only if
\begin{equation*} 
    a \leq i_Z-1, \qquad d-a \leq i_Z-1.
\end{equation*}
Moreover, by \cite[Remark 3.7]{CD}, the pairs $(a,d)$ and $(d-a, d)$ give rise to isomorphic fourfolds when $d \geq a$, so we can assume that $a > d$ or $ 0 \leq a \leq \frac{d}{2}$.

Finally, by \cite[Theorem 3.8]{CD}, any Fano fourfold $X$ of Picard number $\rho_X=3$ which admits a prime divisor $D$ with $\dim \mathcal N_1(D,X)=1$ is isomorphic to one of the varieties $X$ constructed above.
Any such $X$, then, is given by a triplet $(Z, a, d)$, where $Z$ is a smooth Fano threefold with $\rho_Z=1$ and:
\begin{equation}  \label{eq3}
\begin{split}
d \geq 1, \qquad & a>d \text{ or } 0 \leq a \leq \frac{d}{2}, \\
a \leq i_Z -1, \qquad & d-a \leq i_Z -1;
\end{split}
\end{equation}
moreover $|dH|$ contains a smooth surface. The inequalities \eqref{eq3} imply that the index of the Fano threefold $Z$ cannot equal to $1$. Notice that the prime divisors $G$ and $\widehat G$ are isomorphic to $Z$, $\rho_G=\rho_{\widehat G}=1$, hence $\dim \mathcal N_1(G,X)=\dim \mathcal N_1(\widehat G,X)=1$. 
\end{remark}

\begin{remark}
Further observations:
\label{re2}
\begin{itemize}
    \item[(i)] Since $-K_X \cdot F=1$, we have that the index of $X$ is $i_X=1$.
    \item[(ii)] The classes of $\varphi ^*H$, $\widehat G$, $E$, and the classes of $\varphi ^*H$, $G$, $\widehat E$ are both a basis of $\mathcal N^1(X)$.
    Moreover,
    \begin{equation*}
        \alpha \varphi ^* H + \beta \widehat G + (\beta + \gamma)E \equiv  (\alpha + a\beta +d\gamma) \varphi ^* H + \beta G - \gamma \widehat E
    \end{equation*}
    for all $\alpha, \beta, \gamma \in \mathbb R$, and 
    \begin{equation*}
        -K_X \equiv (i_Z-a) \varphi ^* H + 2 \widehat G + E \equiv  (i_Z + a - d) \varphi ^* H + 2 G + \widehat E.
    \end{equation*}
    \item[(iii)] Under the conditions \eqref{eq3}, $Y$ and $\widehat Y$ are also Fano fourfolds by \cite[Example 3.3(2)]{Deb}. The cone $\Nef Y$ is generated by the classes of $\pi^*H$ 
    and $\widehat G_Y$, while $\Nef {\widehat Y}$ is generated by the classes of $\widehat \pi^*H$ and $\widehat \sigma (G)$. Note that the tautological line bundles $\bigO Y(1)$ and $\bigO {\widehat Y} (1)$ are isomorphic, respectively, to the line bundles $\bigO Y(\widehat G_Y)$ and $\bigO {\widehat Y} (\widehat \sigma (G))$.
\end{itemize}
\end{remark}

\begin{remark}
\label{re3}
For any irreducible curve $C \subset X$, we set $C^{\perp}:=\{D \in \mathcal N^1(X)| D\cdot C=0\}$. As $X$ is smooth and Fano, the nef cone $\Nef X$ is closed and polyhedral, and is generated by finitely many extremal rays. A section of $\Nef X$ is:\\
        \begin{minipage}{0.23\textwidth}
        \begin{tikzpicture}
        \draw (0,0) node[anchor=north]{$R_3$};
        \draw (1,1.4) node[anchor=south]{$R_2$};
        \draw (-1,1.4) node[anchor=south]{$R_1$};
        \draw[fill=black] (0,0) circle (1.5pt);
        \draw[fill=black] (1,1.4) circle (1.5pt);
        \draw[fill=black] (-1,1.4) circle (1.5pt);
        \draw (0,0) -- (1,1.4) node [midway, fill=white] {$\widehat \eta$};
        \draw (0,0) -- (-1,1.4) node [midway, fill=white] {$\widehat \tau$};
        \draw (1,1.4) -- (-1,1.4) node [midway, fill=white] {$\tau$};
        \end{tikzpicture}
        \end{minipage}
        \begin{minipage}{0.75\textwidth}
        $R_1= \mathbb R^+ [\varphi^* H]$, $R_2= \mathbb R^+ [G]$, and $R_3= \mathbb R^+ [G + \widehat E]$, if $a=0$;
        \end{minipage}\\
        \begin{minipage}{0.23\textwidth}
        \begin{tikzpicture}
        \draw (1,0) node[anchor=north]{$R_4$};
        \draw (-1,0) node[anchor=north]{$R_3$};
        \draw (1,1.4) node[anchor=south]{$R_2$};
        \draw (-1,1.4) node[anchor=south]{$R_1$};
        \draw[fill=black] (1,0) circle (1.5pt);
        \draw[fill=black] (-1,0) circle (1.5pt);
        \draw[fill=black] (-1,1.4) circle (1.5pt);
        \draw[fill=black] (1,1.4) circle (1.5pt);
        \draw (1,0) -- (-1,0) node [midway, fill=white] {$\widehat \eta$};
        \draw (-1,0) -- (-1,1.4) node [midway, fill=white] {$\widehat \tau$};
        \draw (1,1.4) -- (-1,1.4) node [midway, fill=white] {$\tau$};
        \draw (1,0) -- (1,1.4) node [midway, fill=white] {$\eta$};
        \end{tikzpicture}
        \end{minipage}
        \begin{minipage}{0.75\textwidth}
        $R_1= \mathbb R^+ [\varphi^* H]$, $R_2= \mathbb R^+ [a\varphi^*H +G]$, and $R_3= \mathbb R^+ [G + \widehat E]$, \\$R_4= \mathbb R^+ [dG + a\widehat E]$, if $0<a \leq \frac{d}{2}$;
        \end{minipage}\\
        \begin{minipage}{0.23\textwidth}
        \begin{tikzpicture}
        \draw (0,0) node[anchor=north]{$R_3$};
        \draw (1,1.4) node[anchor=south]{$R_2$};
        \draw (-1,1.4) node[anchor=south]{$R_1$};
        \draw[fill=black] (0,0) circle (1.5pt);
        \draw[fill=black] (1,1.4) circle (1.5pt);
        \draw[fill=black] (-1,1.4) circle (1.5pt);
        \draw (0,0) -- (1,1.4) node [midway, fill=white] {$\eta$};
        \draw (0,0) -- (-1,1.4) node [midway, fill=white] {$\widehat \tau$};
        \draw (1,1.4) -- (-1,1.4) node [midway, fill=white] {$\tau$};
        \end{tikzpicture}
        \end{minipage}
        \begin{minipage}{0.75\textwidth}
        $R_1= \mathbb R^+ [\varphi^* H]$, $R_2= \mathbb R^+ [a\varphi^*H +G]$, and $R_3= \mathbb R^+ [\widehat G]$, if $a > d$.
        \end{minipage}

Moreover, $\tau=F^{\perp} \cap \Nef X$, $\widehat \tau= {\widehat F}^{\perp} \cap \Nef X$, $\eta = {C_G}^{\perp} \cap \Nef X$, and $\widehat \eta = {C_{\widehat G}}^{\perp} \cap \Nef X$. One can check that all the contractions are divisorial, apart from the contraction given by the ray $R_1$, which is always of fibre type and coincides with the conic bundle $\varphi \colon X \to Z$, and the contraction given by the ray $R_2$ for $a=0$, which is again of fibre type and corresponds to a morphism $X \to \proj 1$; the contractions of $\tau$ and $\widehat \tau$ are $\sigma \colon X \to Y$ and $\widehat \sigma \colon X \to \widehat Y$, while the contractions of $\eta$ and $\widehat \eta$ send a divisor to a point, respectively $G$ and $\widehat G$.
\end{remark}

In \cite[Table \S 12.1]{IP} we find the list of all smooth Fano threefolds with Picard number $\rho_Z=1$. We are interested in those with index $i_Z \geq 2$, and we collect all the relevant varieties in Table 1 for the reader's convenience. The last two columns display, respectively, the base locus of the linear system $|H|$, and whether these varieties are rational ($+$) or not ($-$). We also include the dimension of the cohomology groups $H^0(\mathcal T_Z)$ and $H^1(\mathcal T_Z)$ of the tangent sheaf $\mathcal T_Z$. The references for $h^0(\mathcal T_Z)$ are \cite[Theorem 1.3]{PS} for $Z_1$, \cite[Theorem 3.4]{LP} for $Z_2$, \cite[Theorem 7.5]{PV} for $Z_5$, and \cite[Theorem 1.1, Theorem 1.3]{CPZ} for $Z_3$, $Z_4$ and $Z_6$ (see also \cite[Theorem 3.1]{Ben}). For similar results on the automorphism group of smooth Fano threefolds with $\rho_Z=1$, see also \cite[Theorem 1.1.2]{KPS}.

As for computing $h^1(\mathcal T_Z)$, by applying the Hirzebruch-Riemann-Roch theorem \cite[Appendix A, Theorem 4.1]{Har} to the sheaves $\mathcal T_Z$ and $\Omega^1_Z$, and the Nakano vanishing theorem \cite[Theorem 4.2.3]{Laz}, we get $$\chi(\mathcal T_Z)=h^0(\mathcal T_Z) - h^1(\mathcal T_Z)=-\frac{1}{2} K_Z^3 - \hodge 12Z - 17.$$

\begin{center}
\textbf{\large{Table 1 - Fano threefolds of Picard number 1 and index $\mathbf{i_Z \geq2}$}}
\end{center}
\begin{center}
\makebox[\textwidth]{
\begin{tabular}{ |m{0.55\textwidth}|c|c|c|c|c|c|c|c|}
\hline
$Z$ & $i_Z$ & $H^3$ & $-K_Z^3$ & $h^{1,2}$ & $h^0(\mathcal T_Z)$ & $h^1(\mathcal T_Z)$ & ${\rm Bs}|H|$ & R \\
\hline
\hline
$Z_1=$ a hypersurface of degree $6$ in the weighted projective space $\Proj(1,1,1,2,3)$ & $2$ & $1$ & $8$ & $21$ & 0 & 34 & $\{P_0\}$ & $-$\\ 
\hline
$Z_2=$ a cyclic cover of degree $2$ of $\proj 3$ ramified along a smooth surface of degree $4$ & $2$ & $2$ & $8\cdot2$ & $10$ & 0 & 19 & $\emptyset$ & $-$ \\ 
\hline
$Z_3=$ a smooth cubic in $\proj 4$ & $2$ & $3$ & $8\cdot3$ & $5$ & 0 & 10 & $\emptyset$ & $-$ \\ 
\hline
$Z_4=$ a smooth intersection of two quadrics in $\proj 5$ & $2$ & $4$ & $8\cdot4$ & $2$ & 0 & 3 & $\emptyset$ & $+$ \\
\hline
$Z_5 \subset \proj 6$, a section of the Grassmannian ${\rm Gr}(2,5) \subset \proj 9$ by a subspace of codimension $3$ & $2$ & $5$ & $8\cdot5$ & $0$ & 3 & 0 & $\emptyset$ & $+$ \\
\hline
$Z_6=$ a smooth quadric in $\proj 4$ & $3$ & $2$ & $27\cdot2$ & $0$ & 10 & 0 & $\emptyset$ & $+$ \\
\hline
$Z_7=$ $\proj 3$ & $4$ & $1$ & $64$ & $0$ & 15 & 0 & $\emptyset$ & $+$ \\
\hline
\end{tabular}
}
\end{center}

\begin{lm}
\label{lemma1}
For $i=2, \dots, 7$, the line bundle $\bigO {Z_i}(d)$ is globally generated for all $d \geq 1$, and $\bigO {Z_1}(d)$ is globally generated for all $d \geq 2$. Moreover, a general surface $A \in |\bigO {Z_1}(1)|$ is smooth. 
\end{lm}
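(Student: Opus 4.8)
The plan is to dispose of $Z_2,\dots,Z_7$ in a couple of lines and to concentrate on $Z_1$, where essentially all the content lies.

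For $i\in\{3,4,5,6\}$ the threefold $Z_i$ comes with a natural closed embedding into a projective space $\proj N$ (with $N=4,5,6,4$ respectively), and its ample generator $\bigO{Z_i}(1)$ is the restriction $\bigO{\proj N}(1)|_{Z_i}$, because $\Pic{Z_i}\cong\mathbb Z$ is generated by the hyperplane class: this follows from the Lefschetz hyperplane theorem (applied to $Z_3$ as a hypersurface in $\proj 4$, to $Z_4$ as a complete intersection of two quadrics in $\proj 5$, and to $Z_5$ as a codimension-$3$ linear section of ${\rm Gr}(2,5)$), and directly from the fact that a quadric of dimension $\ge 3$ has Picard group generated by its hyperplane class (for $Z_6$). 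For $i=2$, $Z_2$ is a double cover $f\colon Z_2\to\proj 3$ and $\bigO{Z_2}(1)=f^{*}\bigO{\proj 3}(1)$ (using $\rho_{Z_2}=1$ together with $-K_{Z_2}\equiv f^{*}\bigO{\proj 3}(2)$). In all these cases $\bigO{Z_i}(d)$ is the pullback of the globally generated sheaf $\bigO{\proj N}(d)$ along a morphism, hence is itself globally generated; and for $Z_7=\proj 3$ there is nothing to prove.

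For $Z_1$, write $\mathbb P:=\Proj(1,1,1,2,3)$ with coordinates $x_0,x_1,x_2,y,z$ of weights $1,1,1,2,3$, so that $Z_1\subset\mathbb P$ is a hypersurface of weighted degree $6$ and $\bigO{Z_1}(1)=\bigO{\mathbb P}(1)|_{Z_1}$. I would first show that $\bigO{Z_1}(2)$ is globally generated. The weighted-degree-$2$ monomials $x_ix_j$ ($0\le i\le j\le 2$) and $y$ are global sections of $\bigO{\mathbb P}(2)$ whose common zero locus in $\mathbb P$ is the single point $Q=[0:0:0:0:1]$, which belongs to $\operatorname{Sing}(\mathbb P)=\{[0:0:0:1:0],\,[0:0:0:0:1]\}$. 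Since $Z_1$ is smooth it is disjoint from $\operatorname{Sing}(\mathbb P)$ (equivalently, the coefficients of $z^{2}$ and of $y^{3}$ in the defining equation of $Z_1$ are both nonzero); in particular $Q\notin Z_1$, so these sections already generate $\bigO{Z_1}(2)$ at every point of $Z_1$. Hence $\bigO{Z_1}(2)$ is globally generated, and therefore so is $\bigO{Z_1}(2k)\cong\bigO{Z_1}(2)^{\otimes k}$ for every $k\ge 1$, as a tensor power of a globally generated line bundle. (Alternatively, $-K_{Z_1}\equiv 2H$, and one may instead appeal to base-point-freeness of the anticanonical system of a del Pezzo threefold of degree $1$.)

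Finally, to prove that a general $A\in|\bigO{Z_1}(1)|$ is smooth, I would argue as follows. The system $|\bigO{Z_1}(1)|=|H|$ is spanned by $x_0,x_1,x_2$, so its base locus is $Z_1\cap\{x_0=x_1=x_2=0\}$: the intersection of $Z_1$ with the curve $\{x_0=x_1=x_2=0\}\cong\Proj(2,3)\cong\proj 1$, cut out there by one equation which — again using disjointness of $Z_1$ from $\operatorname{Sing}(\mathbb P)$ — meets the curve in a single reduced point $P_0$, the base point recorded in Table $1$. Because $Z_1$ is smooth and three-dimensional and this base scheme is the reduced point $P_0$, the sections $x_0,x_1,x_2$ map to a basis of $\mathfrak m_{P_0}/\mathfrak m_{P_0}^{2}\otimes\bigO{Z_1}(1)|_{P_0}$; consequently, for every nonzero $(c_0,c_1,c_2)$ the divisor $\{c_0x_0+c_1x_1+c_2x_2=0\}\cap Z_1$ has nonzero differential at $P_0$, i.e. is smooth there. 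By Bertini's theorem a general $A$ is smooth away from $P_0$, and by the above it is smooth at $P_0$ too, so a general $A$ is a smooth surface. The main obstacle is precisely this local analysis near the quotient singularity $Q$ of $\mathbb P$ and near the base point $P_0$ of $|H|$ — checking that smoothness of $Z_1$ forces $Z_1\cap\operatorname{Sing}(\mathbb P)=\emptyset$, and that $P_0$ is a reduced base point so that a general $A$ stays smooth through it; for $Z_2,\dots,Z_7$ everything is formal.
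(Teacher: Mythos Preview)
Your argument is correct. For $Z_2,\dots,Z_7$ it coincides with the paper's proof: the ample generator is either very ample (a hyperplane class, for $i=3,\dots,7$) or the pull-back of $\bigO{\proj 3}(1)$ (for $i=2$), hence globally generated. The only difference lies in the treatment of $Z_1$.

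For $Z_1$ the paper does not compute anything; it simply quotes \cite[Prop.\ 2.4.1, Thm.\ 2.4.5]{IP} for the global generation of $\bigO{Z_1}(2)$ and the fact that $|\bigO{Z_1}(1)|$ has a unique simple base point, and \cite[Prop.\ 2.3.1]{IP} for the smoothness of a general $A\in|\bigO{Z_1}(1)|$. Your route is instead a direct, self-contained calculation in the coordinates of $\Proj(1,1,1,2,3)$: you show that smoothness of $Z_1$ forces the coefficients of $z^2$ and $y^3$ to be nonzero (this is the substantive point---it is not a general fact that a smooth hypersurface in a weighted projective space avoids the ambient singular locus, but here it follows because the affine cone over $Z_1$ would acquire a singularity at the relevant point otherwise), then read off global generation of $\bigO{Z_1}(2)$ from the monomials $x_ix_j,y$, and finally check that $\alpha z^2+\beta y^3$ cuts the curve $\Proj(2,3)$ in a single reduced point, so that $x_0,x_1,x_2$ form a regular system of parameters at $P_0$ and every member of $|H|$ is smooth there. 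One small point you use implicitly is that $H^0(\bigO{Z_1}(1))$ is spanned by $x_0,x_1,x_2$; this follows from the ideal sequence $0\to\bigO{\mathbb P}(-5)\to\bigO{\mathbb P}(1)\to\bigO{Z_1}(1)\to 0$ and the vanishing $H^1(\bigO{\mathbb P}(-5))=0$, or from Riemann--Roch giving $h^0(\bigO{Z_1}(1))=3$. What your approach buys is independence from the reference and an explicit picture of the base point; what the paper's citation buys is brevity.
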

\begin{proof}
Observe that, when $i \neq 1,2$, the ample generator $\bigO {Z_i}(1)$ of $\Pic {Z_i}$ corresponds to a hyperplane section in some projective space $\Proj$, thus it is very ample. This is clear for $i=3,4,6,7$, while it follows from \cite[Theorem 3.2.5(v)]{IP} that $\bigO {Z_5}(1)$ is the restriction to $Z_5$ of the very ample divisor on the Grassmannian ${\rm Gr}(2,5)$ defining the Pl{\"u}cker embedding ${\rm Gr}(2,5) \hookrightarrow \mathbb P^9$.
Furthermore, $\bigO {Z_2}(1)$ is globally generated since it is the pull-back of $\bigO {\proj 3} (1)$. Therefore, any positive multiple of the ample generator is globally generated.

As for $Z_1$, by \cite[Proposition 2.4.1(i), Theorem 2.4.5(i)]{IP} we have that $\bigO {Z_1}(2)$ is globally generated, while $\bigO {Z_1}(1)$ has a unique simple base point. Moreover, $\bigO {Z_1}(3)$ is very ample (see \cite[Table 1]{KPS}), so that $\bigO {Z_1}(d)$ is globally generated for all $d \geq 2$. Nonetheless, by \cite[Proposition 2.3.1]{IP}, a general surface $A$ in the linear system $|\bigO {Z_1}(1)|$ is smooth.\qedhere
\end{proof}

\begin{co}
\label{co1}
For every $Z_i$, $i=1, \dots, 7$, and for every choice of integers $a,d$ satisfying \eqref{eq3}, there exists a smooth Fano fourfold $X^i_{a,d}$ of Picard number 3 constructed as in Remark \ref{re1}. 

Conversely, if $X$ is a smooth Fano fourfold with $\rho_X=3$ and containing a prime divisor $D$ with $\dim \mathcal N_1(D,X)=1$, then $X$ is isomorphic to $X^i_{a,d}$ for some $(Z_i, a, d)$ as above. This gives rise to 28 families, see Table $2$.
\end{co}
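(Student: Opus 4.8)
The plan is to derive both halves of the statement from Remark \ref{re1} (which packages \cite[Ex.\ 3.4, Thm.\ 3.8]{CD}) and Lemma \ref{lemma1}, together with a finite enumeration of the pairs $(a,d)$ allowed by \eqref{eq3}.

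For the first assertion I would fix $Z_i$ and a pair $(a,d)$ satisfying \eqref{eq3} and simply run the construction recalled in Remark \ref{re1}. The only hypothesis there that is not formal is that $|\bigO{Z_i}(d)|$ contains a smooth surface $A$, and this is exactly what Lemma \ref{lemma1} provides: whenever $\bigO{Z_i}(d)$ is globally generated a general $A\in|\bigO{Z_i}(d)|$ is smooth by Bertini (and connected, $\bigO{Z_i}(d)$ being ample), and by the lemma this covers every relevant $(Z_i,d)$ except $Z_1$ with $d$ odd; in that remaining case \eqref{eq3} and $i_{Z_1}=2$ force $(a,d)=(0,1)$, for which the last sentence of Lemma \ref{lemma1} still yields a smooth $A$. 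Given such an $A$, the centre $S\cong A$ is smooth, so the blow-up $\sigma\colon X^i_{a,d}\to Y$ along $S$ is a smooth variety of Picard number $3$, and it is Fano because \eqref{eq3} contains the inequalities $a\leq i_{Z_i}-1$, $d-a\leq i_{Z_i}-1$ of \cite[Remark 3.6]{CD}.

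For the converse I would quote \cite[Theorem 3.8]{CD} directly: any smooth Fano $4$-fold $X$ with $\rho_X=3$ carrying a prime divisor $D$ with $\dim\mathcal N_1(D,X)=1$ is one of the varieties of Remark \ref{re1}, hence arises from a triple $(Z,a,d)$ with $Z$ a smooth Fano $3$-fold of Picard number $1$, with $(a,d)$ satisfying \eqref{eq3}, and with $|dH|$ containing a smooth surface. Since \eqref{eq3} forces $i_Z\geq 2$ (as observed in Remark \ref{re1}), $Z$ must be one of $Z_1,\dots,Z_7$ of Table 1, so $X\cong X^i_{a,d}$. That these triples give pairwise non-isomorphic families can be read off from the numerical invariants computed later (Table 2), or checked directly: the target $Z$ of the fibre-type contraction $R_1$ is intrinsic to $X$, and given $Z$ the pair $(a,d)$ is then recovered, up to the symmetry $(a,d)\leftrightarrow(d-a,d)$ of Remark \ref{re1}, from the normal bundles of the disjoint sections $\widehat\sigma(G)$, $\widehat\sigma(\widehat G)$.

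Finally I would count: for each value $i_Z\in\{2,3,4\}$ one lists the pairs $(a,d)$ with $d\geq 1$, $a\leq i_Z-1$, $d-a\leq i_Z-1$ and ($a>d$ or $0\leq a\leq d/2$). This returns $2$ pairs when $i_Z=2$ (namely $(0,1)$ and $(1,2)$), hence $10$ families from $Z_1,\dots,Z_5$; $6$ pairs when $i_Z=3$, hence $6$ families from $Z_6$; and $12$ pairs when $i_Z=4$, hence $12$ families from $Z_7=\proj3$; in total $28$. I do not expect a genuine obstacle in this argument, which is essentially an assembly of the earlier results; the one point deserving a moment's care is that the $Z_1$ cases really do produce smooth Fano $4$-folds, which is precisely why Lemma \ref{lemma1} was stated with its extra clause about a general surface in $|\bigO{Z_1}(1)|$ being smooth.
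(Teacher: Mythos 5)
Your proposal is correct and follows essentially the same route as the paper: the forward direction reduces to producing a smooth $A\in|\bigO{Z_i}(d)|$ via Lemma \ref{lemma1} (with the bound $d\leq 2i_Z-2$ from \eqref{eq3} isolating the single odd case $(Z_1,0,1)$, handled by the lemma's last clause), and the converse is a direct citation of \cite[Theorem 3.8]{CD} together with the observation that \eqref{eq3} forces $i_Z\geq 2$, followed by the enumeration yielding $2\cdot 5+6+12=28$. Your extra remarks on distinguishing the families are consistent with the paper, which in fact defers that point to Section \ref{section4} (Corollary \ref{co4.2}) rather than proving it inside Corollary \ref{co1}.
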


\begin{proof}
Assume that the integers $a,d$ satisfy \eqref{eq3}. This yields $d \leq 2i_Z-2$. Therefore, by Lemma \ref{lemma1}, a general surface $A \in |\bigO {Z_i}(d)|$ is smooth for every $i=1, \dots, 7$, and so, given the triplet $(Z_i, a, d)$, we can construct a Fano fourfold $X:=X^i_{a,d}$ of Picard number 3. Notice that $X^i_{a,d}$ contains a prime divisor $G$ such that $\rho_G=1$, hence $\dim \mathcal N_1(G,X)=1$  (see Remark \ref{re1}).

Conversely, by \cite[Theorem 3.8]{CD}, any Fano fourfold $X$ of Picard number $\rho_X=3$ which admits a prime divisor $D$ with $\dim \mathcal N_1(D,X)=1$ is isomorphic to one of the $X^i_{a,d}$.
\end{proof}

Corollary \ref{co1} is the first step towards the proof of Theorem \ref{thm1}, as it shows that there are 28 choices of integers $a,d$ satisfying \eqref{eq3}, each of which determines a family of smooth Fano varieties, and that every smooth Fano fourfold of Picard number 3 containing a prime divisor of Picard number 1 appears in one of these families. We will later see in Section \ref{section4} that the members of each family are all deformation equivalent, and that distinct families do not have common members (see Corollary \ref{co4.2}).

\smallskip

\noindent \textbf{Notation.} If not specified otherwise, the varieties $Z$, $Y$, $X$, the corresponding maps and divisors will be the one described in Remark \ref{re1}. When needed, we will use the notation $Z_i$ for the varieties in Table 1, and $X^i_{a,d}$ for the variety given by the triplet $(Z_i, a, d)$.

\section{Numerical invariants}
\label{section3}

In this section we compute the numerical invariants of the Fano fourfolds in Corollary 	\ref{co1}; these invariants are listed in Table 2.

Our first goal is to compute the Hodge numbers. To do so, we will use the Hodge polynomial of a smooth variety $W$
\[e(W)= e(W) (u,v):= \sum_{p,q} \hodge pqW u^p v^q\]
and its properties with respect to $\proj n$-bundles and blow-ups.
Namely:
\begin{lm}
\label{ex4}
Let $W$ be a smooth variety, $\Proj (\mathcal E)$ a $\proj n$-bundle over $W$ and $\widetilde W$ the blow-up of $W$ along a subvariety $V$ of codimension $c$.
Then:
\begin{equation*}
\label{1}
    e(\Proj(\mathcal E))=e(W) \cdot e(\proj n)
\end{equation*}
and 
\begin{equation*}
\label{2}
    e(\widetilde W)=e(W) + e(V)\cdot [e(\proj {c-1})-1].
\end{equation*}
\end{lm}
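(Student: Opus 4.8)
The plan is to reduce both identities to the standard structure theorems for the cohomology of a projective bundle and of a blow-up, the main point being to check that in each case the relevant decomposition respects Hodge structures. Since every variety to which this lemma is applied in the paper is smooth and projective, I may assume $W$ and $V$ smooth projective, so that $e(W)$ genuinely records the Hodge numbers of the Hodge decomposition of $H^*(W)$.

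For the $\proj n$-bundle $p\colon\Proj(\mathcal E)\to W$, I would invoke the Leray--Hirsch theorem: writing $\xi:=c_1(\bigO{\Proj(\mathcal E)}(1))$, the classes $1,\xi,\dots,\xi^n$ form a basis of $H^*(\Proj(\mathcal E))$ as an $H^*(W)$-module, so that for every $k$ one gets a splitting $H^k(\Proj(\mathcal E))\cong\bigoplus_{j=0}^{n}H^{k-2j}(W)\cdot\xi^j$. Because $\xi$ is an algebraic class it has Hodge type $(1,1)$, hence multiplication by $\xi^j$ shifts Hodge bidegree by $(j,j)$, and so the $j$-th summand is isomorphic, as a Hodge structure, to $H^{k-2j}(W)$ Tate-twisted by $(-j)$. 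Thus $\hodge pq{\Proj(\mathcal E)}=\sum_{j=0}^{n}\hodge{p-j}{q-j}{W}$; multiplying by $u^pv^q$, summing over $p,q$, and using $e(\proj n)=\sum_{j=0}^{n}(uv)^j$ gives $e(\Proj(\mathcal E))=e(W)\cdot e(\proj n)$.

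For the blow-up $\sigma\colon\widetilde W\to W$ along the smooth centre $V$ of codimension $c$, the exceptional divisor is a $\proj{c-1}$-bundle over $V$, and the classical blow-up formula, in its Hodge-theoretic form, yields for every $k$ an isomorphism of Hodge structures
\[
H^k(\widetilde W)\;\cong\;H^k(W)\;\oplus\;\bigoplus_{j=1}^{c-1}H^{k-2j}(V)(-j),
\]
the extra summands arising from the powers $1,\dots,c-1$ of the restriction to the exceptional divisor of its own class, which is again algebraic of type $(1,1)$. Reading off Hodge numbers, $\hodge pq{\widetilde W}=\hodge pqW+\sum_{j=1}^{c-1}\hodge{p-j}{q-j}{V}$, whence
\[
e(\widetilde W)=e(W)+e(V)\sum_{j=1}^{c-1}(uv)^j=e(W)+e(V)\bigl[e(\proj{c-1})-1\bigr].
\]
Alternatively, this second identity follows from the additivity of the Hodge--Deligne polynomial applied to the stratification of $\widetilde W$ into the complement of the exceptional divisor (isomorphic to $W\setminus V$) and the exceptional divisor itself (a $\proj{c-1}$-bundle over $V$).

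The computation is routine; the only thing that needs care is to use the versions of the projective-bundle and blow-up formulas that are isomorphisms of rational Hodge structures rather than merely of $\mathbb Q$-vector spaces, which is precisely where the algebraicity of $\xi$ and of the exceptional class enters. These refined statements are standard (see e.g. Voisin's book on Hodge theory, or Griffiths--Harris).
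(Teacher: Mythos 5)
Your argument is correct, and it proves exactly the two identities needed, but it follows a different route from the paper. The paper disposes of the lemma in one line, by citing Cheah's formalism of Hodge (Hodge--Deligne) polynomials: $e$ is additive over locally closed decompositions and multiplicative on Zariski-locally trivial fibrations, so $e(\Proj(\mathcal E))=e(W)\cdot e(\proj n)$ is immediate, and the blow-up identity follows from cutting $\widetilde W$ into $W\setminus V$ and the exceptional $\proj{c-1}$-bundle over $V$ and comparing with the decomposition $W=(W\setminus V)\cup V$ --- this is precisely the ``alternative'' you sketch at the end for the second identity. Your main proof instead uses the classical Hodge-theoretic structure theorems (the projective bundle decomposition via Leray--Hirsch with its Tate twists, and the blow-up formula for Hodge structures as in Voisin), which is a perfectly valid and self-contained derivation; your care in pointing out that the isomorphisms respect Hodge structures, via algebraicity of $\xi$ and of the exceptional class, is exactly the point that makes the Hodge-number bookkeeping legitimate. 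The trade-off is that your route needs $W$ and $V$ smooth projective (compact K\"ahler) so that the pure Hodge decomposition exists --- an assumption you correctly flag and which is harmless here, since the paper only applies the lemma to smooth projective $Y$, $X$, $Z$, $A$ --- whereas the additivity approach via Cheah's polynomial works for arbitrary varieties with compactly supported mixed Hodge structures and reduces the blow-up identity to a purely formal computation. Either way the conclusion is the same, so your proposal is a sound, slightly more hands-on substitute for the paper's citation.
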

\begin{proof}
By \cite[Introduction 0.1]{Che} and quick computations.\qedhere
\end{proof}

\smallskip

Since $Y$ and $X$ are Fano varieties of Picard number, respectively, 2 and 3, the only unknown Hodge numbers are $h^{1,2}$, $h^{1,3}$, and $h^{2,2}$.
We begin by computing the Hodge numbers of $A$, which is the smooth surface in $|\bigO Z(d)|$ fixed in Remark \ref{re1}. Recall that \eqref{eq3} yields $d \leq 2i_Z-2$, and that $H$ is an effective divisor of the linear system $|\bigO Z(1)|$.

\begin{lm}\label{hodge A}
Let $Z$ be a smooth Fano threefold with $\rho_X=1$ and $i_Z \geq 2$. Let $A$ be a smooth surface in $|\bigO Z(d)|$. Then all but the following Hodge numbers of $A$ vanish:
\begin{itemize}\setlength\itemsep{.25em}
\item $\hodge 00A=1$;
\item $\hodge 02A=1$,\, for $d=i_Z$;
\item $\hodge 02A=5$,\, for $d=4$ and $i_Z=3$ (i.e. $Z$ is a smooth quadric threefold);
\item $\hodge 02A=\binom{d-1}{3}$,\, for $d=5,6$ and $i_Z=4$ (i.e $Z \cong \mathbb P^3$);
\item $\hodge 11A=10+10\cdot\hodge 02A-d(d-i_Z)^2\delta$, where $\delta=H^3$.
\end{itemize}
\end{lm}
\begin{proof}
We first carry out the computation for $\hodge 02A$. By the exact sequence
\[0 \to \bigO Z(-d) \to \bigO Z \to \bigO A \to 0\]
we have $H^2(\bigO A) \cong H^3(\bigO Z(-d))$. Moreover, $h^3(\bigO Z(-d))=0$ for $d < i_Z$, and $h^3(\bigO Z(-d))=1$ for $d=i_Z$ by Kodaira vanishing.
This concludes the case $i_Z=2$. For $i_Z=3$ we have $1 \leq d \leq 4$ and $h^3(\bigO Z(-4))=h^4(\proj 4, \bigO {\proj 4}(-6))=5$;
for $i_Z=4$ we have $1 \leq d \leq 6$ and $h^3(\bigO {\proj 3}(-d))=\binom{d-1}{3}$ for $d=5,6$.

To compute $\hodge 11A$ it suffices to apply Noether's formula $\chi_{top}(A)=12\chi(\bigO A)-K_A^2$ \cite[Remark I.14]{Bea} to get
\begin{equation*}
    \hodge 11A=10+10\cdot \hodge 02A-d(d-i_Z)^2\delta
\end{equation*}
where $\delta=H^3$. Lastly, the vanishing of $\hodge 01A$ follows from the Lefschetz hyperplane theorem.
\end{proof}

\smallskip

By Lemma \ref{ex4} we get $e(Y)=e(Z) \cdot e(\proj 1)$ and $e(X)=e(Y)+e(A)\cdot(e(\proj 1)-1)$, thus the Hodge numbers of $Y$ and $X$ are:
\begin{equation*}
  \hodge pqY =
    \begin{cases}
      2 & (p,q)=(2,2)\\
      \hodge 12Z & (p,q)=(1,2)\\
      0 & (p,q)=(1,3)\\
    \end{cases}       
\end{equation*}
and
\begin{equation*}
  \hodge pqX =
    \begin{cases}
      \hodge 12Z & (p,q)=(1,2)\\
      \hodge 02A & (p,q)=(1,3)\\
      2 + \hodge 11A & (p,q)=(2,2) \ .\\
    \end{cases}
\end{equation*}
Notice that all the Hodge numbers of $X$ only depend on $d$ and $Z$, and not on $a$.

\bigskip

Our next goal is to compute the anticanonical degree $K_X^4$ and the dimension of the space of global sections $H^0(\bigO X(-K_X))$. We can do it simultaneously: the Hirzebruch-Riemann-Roch theorem \cite[Appendix A, Theorem 4.1]{Har} provides a formula to compute the Euler characteristic of a locally free sheaf on a smooth projective variety. We apply it to the anticanonical sheaf $\bigO W(-K_W)$ of a smooth fourfold $W$, yielding
\begin{equation}
\label{eq4}
    \chi(\bigO W(-K_W))=\chi(\bigO W) + \frac{1}{12}\Bigl(2K_W^4+K_W^2 \cdot c_2(W)\Bigr).
\end{equation}
Since $X$ is Fano, Kodaira vanishing implies $\chi(\bigO X(-K_X))=h^0(\bigO X(-K_X))$.

\smallskip

The following two standard computations provide the necessary tools.
\begin{pr}[\cite{CR}, Lemma 3.2]
\label{Prop.5}
Let $W$ be a smooth projective variety, $\dim W=4$, and let $\alpha \colon \widetilde W \to W$ be the blow-up of $W$ along a smooth irreducible surface V. Then:
\begin{itemize}\setlength\itemsep{.25em}
    \item[(i)] $K_{\widetilde W}^4=K_W^4-3(K_{W|V})^2-2K_V \cdot K_{W|V}+c_2(\normal VW)-K_V^2$;
    \item[(ii)] $K_{\widetilde W}^2 \cdot c_2(\widetilde W)=K_W^2 \cdot c_2(W)-12\chi(\bigO V)+2K_V^2-2K_V \cdot K_{W|V}-2c_2(\normal VW)$;
    \item[(iii)] $\chi(\bigO {\widetilde W}(-K_{\widetilde W}))=\chi(\bigO W(-K_W))-\chi(\bigO V)-\frac{1}{2}\Bigl((K_{W|V})^2+K_V \cdot K_{W|V}\Bigr)$.
\end{itemize}
\end{pr}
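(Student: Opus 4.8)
Parts (i)--(iii) form a standard computation in the intersection theory of a blow-up, and they are tightly linked: the plan is to deduce (iii) formally from (i), (ii) and the Hirzebruch--Riemann--Roch identity \eqref{eq4}, so that the real content lies in (i) and (ii).

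Write $N:=\normal VW$, a rank-$2$ bundle on the surface $V$, so that $c_2(N)=c_2(\normal VW)$, and let $j\colon E\hookrightarrow\widetilde W$ be the inclusion of the exceptional divisor. Since $V$ has codimension $2$ in $W$, one has $K_{\widetilde W}=\alpha^*K_W+E$, and $E$ is a $\proj 1$-bundle $p\colon E\to V$ (the projectivised normal bundle); let $\zeta$ be the divisor class on $E$ restricting to the hyperplane class on each fibre, so that $E|_E=-\zeta$ and $\zeta$ satisfies the Grothendieck relation $\zeta^2=-p^*c_1(N)\cdot\zeta-p^*c_2(N)$. For any class $\beta$ on $W$ one has $\alpha^*\beta|_E=p^*(\beta|_V)$; combined with the projection formula this yields the evaluation rules $\int_E p^*\omega\cdot\zeta=\int_V\omega$ for a $0$-cycle $\omega$ on $V$, $\int_E p^*\gamma\cdot\zeta^2=-\gamma\cdot c_1(N)$ for a divisor class $\gamma$ on $V$, and $\int_E\zeta^3=c_1(N)^2-c_2(N)$. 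For (i) I expand $K_{\widetilde W}^4=(\alpha^*K_W+E)^4$ term by term: the $(\alpha^*K_W)^3\cdot E$ term vanishes because $K_{W|V}$ is a divisor class on the surface $V$, so $(K_{W|V})^3=0$, and the remaining terms collapse, via the rules above, to
\[
K_{\widetilde W}^4=K_W^4-6(K_{W|V})^2-4\,K_{W|V}\cdot c_1(N)-c_1(N)^2+c_2(N);
\]
inserting the adjunction relation $c_1(N)=K_V-K_{W|V}$ on $V$ turns this into the asserted formula (i).

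For (ii) the extra ingredient is the blow-up formula for $c_2(\widetilde W)$, which I would read off from the exact sequences relating $T_{\widetilde W}|_E$, $p^*T_V$, $T_{E/V}$ and $\normal E{\widetilde W}\cong\bigO{\widetilde W}(E)|_E$ (equivalently, from the standard blow-up formula for the total Chern class): it has the shape $c_2(\widetilde W)=\alpha^*c_2(W)+j_*\lambda$, with $\lambda$ an explicit class on $E$ built from $p^*c_1(V)$, $p^*c_1(N)$ and $\zeta$. Expanding $K_{\widetilde W}^2\cdot c_2(\widetilde W)=(\alpha^*K_W+E)^2\cdot c_2(\widetilde W)$ with the same toolkit produces $K_W^2\cdot c_2(W)$ together with correction terms supported on $V$; the one carrying $c_2(V)$ is rewritten through Noether's formula $c_2(V)=12\chi(\bigO V)-K_V^2$, and a final use of adjunction yields (ii). For (iii): since $\alpha$ is the blow-up of a smooth variety along a smooth centre, $\alpha_*\bigO{\widetilde W}=\bigO W$ and $R^i\alpha_*\bigO{\widetilde W}=0$ for $i>0$, hence $\chi(\bigO{\widetilde W})=\chi(\bigO W)$; applying \eqref{eq4} to both $W$ and $\widetilde W$ and substituting (i) and (ii) gives
\[
2K_{\widetilde W}^4+K_{\widetilde W}^2\cdot c_2(\widetilde W)=2K_W^4+K_W^2\cdot c_2(W)-6(K_{W|V})^2-6\,K_V\cdot K_{W|V}-12\chi(\bigO V),
\]
and dividing by $12$ and regrouping gives exactly (iii).

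The only genuine obstacle is the sign and convention bookkeeping: fixing a consistent convention for the projectivised (co)normal bundle, and hence for the signs in $E|_E$ and in the Grothendieck relation, and writing down the correct blow-up formula for $c_2(\widetilde W)$. Once these are pinned down, (i)--(iii) are mechanical expansions; alternatively one may simply invoke \cite{CR}.
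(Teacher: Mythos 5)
The paper gives no proof of this proposition at all: it is quoted from \cite{CR}, Lemma 3.2, and the argument is delegated to that reference (only the companion $\proj 1$-bundle statement, Proposition \ref{Prop.6}, is proved in the text). Your proposal therefore supplies more than the paper does, and the parts you actually execute are correct. Your conventions are internally consistent: with $E=\Proj(\conormal VW)$ one indeed has $E|_E=-\zeta$ and $\zeta^2=-p^*c_1(N)\cdot\zeta-p^*c_2(N)$, your push-forward rules ($p_*\zeta=1$, $p_*\zeta^2=-c_1(N)$, $p_*\zeta^3=c_1(N)^2-c_2(N)$) are exact, the expansion of $(\alpha^*K_W+E)^4$ gives precisely your intermediate formula, and substituting the adjunction relation $c_1(N)=K_V-K_{W|V}$ yields (i) as stated; likewise your derivation of (iii) from (i), (ii), $\chi(\bigO{\widetilde W})=\chi(\bigO W)$ and \eqref{eq4} is correct arithmetic, so (iii) is genuinely formal once (i) and (ii) are known. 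The only place where your text is a plan rather than a proof is (ii): the class $\lambda$ in $c_2(\widetilde W)=\alpha^*c_2(W)+j_*\lambda$ is never written down, and it is the one nontrivial input; note also that $\lambda$ must involve $\zeta$ and not only pulled-back divisor classes, since $j_*$ restricted to $p^*\mathcal N^1(V)$ does not account for the whole correction. To close this, either quote the blow-up formula for the total Chern class (Fulton, Example 15.4.3) or use the standard exact sequence $0\to\alpha^*\Omega^1_W\to\Omega^1_{\widetilde W}\to j_*\Omega^1_{E/V}\to 0$ together with the restriction sequences you list; after that, the same push-forward calculus plus Noether's formula and adjunction does produce (ii). In short: your route is a correct and standard direct verification where the paper simply cites \cite{CR}; (i) and (iii) are complete, (ii) is a sound but unexecuted computation, and invoking \cite{CR} remains an acceptable alternative.
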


\begin{pr}
\label{Prop.6}
Let $W$ be a smooth projective variety of dimension $3$, and $\mathcal E$ a rank $2$ vector bundle on $W$.
Let $\beta \colon \Proj(\mathcal E) \to W$ be a $\proj 1$-bundle over $W$. Then:
\begin{itemize}\setlength\itemsep{.25em}
    \item[(i)] $K_{\Proj(\mathcal E)}^4=-8K_W \cdot c_1(\mathcal E)^2 +32K_W \cdot c_2(\mathcal E)-8K_W^3$;
    \item[(ii)] $K_{\Proj(\mathcal E)}^2 \cdot c_2(\Proj(\mathcal E))=-2K_W \cdot c_1(\mathcal E)^2+8K_W \cdot c_2(\mathcal E)-2K_W^3-4K_W \cdot c_2(W)$;
    \item[(iii)] $\chi(\bigO {\Proj(\mathcal E)}(-K_{\Proj(\mathcal E)}))= \chi(\bigO {\Proj(\mathcal E)}) + 6K_W \cdot c_2(\mathcal E) -\frac{1}{2} \Bigl(3K_W^3+3K_W \cdot c_1(\mathcal E)^2 \Bigr) -\frac{1}{3}K_W \cdot c_2(W)$.
\end{itemize}
\end{pr}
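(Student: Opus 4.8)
The plan is to compute each of the three quantities by first deriving the Chow/cohomology ring structure of $P:=\Proj(\mathcal E)$ over the threefold $W$, then expressing $K_P$, $c_2(P)$ and $c_2(\mathcal O_P(-K_P))$ in terms of data pulled back from $W$ together with the tautological class, and finally integrating. Concretely, let $\beta\colon P\to W$ be the $\proj1$-bundle and let $\xi=c_1(\mathcal O_P(1))$ be the tautological class, normalised so that $\beta_*(\xi^{k})$ recovers the Segre classes of $\mathcal E$; the Grothendieck relation reads
\begin{equation*}
\xi^2 - \beta^*c_1(\mathcal E)\cdot \xi + \beta^*c_2(\mathcal E)=0,
\end{equation*}
and $\beta_*\xi=1$, $\beta_*\xi^2=c_1(\mathcal E)$, $\beta_*\xi^3=c_1(\mathcal E)^2-c_2(\mathcal E)$, with all classes of $\beta^*$-type killed by $\beta_*$ until multiplied by $\xi$. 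The relative tangent sequence $0\to\mathcal O_P\to\beta^*\mathcal E^\vee\otimes\mathcal O_P(1)\otimes(\det)\to\mathcal T_{P/W}\to 0$ (equivalently the relative Euler sequence) gives $c_1(\mathcal T_{P/W})=2\xi-\beta^*c_1(\mathcal E)$, so that
\begin{equation*}
-K_P = 2\xi-\beta^*c_1(\mathcal E)-\beta^*K_W,
\end{equation*}
and from the full tangent sequence $0\to\mathcal T_{P/W}\to\mathcal T_P\to\beta^*\mathcal T_W\to0$ one gets $c_2(P)=c_2(\mathcal T_{P/W})+c_1(\mathcal T_{P/W})\cdot\beta^*c_1(W)+\beta^*c_2(W)$, where $c_2(\mathcal T_{P/W})$ is obtained from the rank-$2$ relative Euler sequence.

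Once these expressions are in hand, part (i) is a matter of expanding $K_P^4=(-1)^4(2\xi-\beta^*c_1(\mathcal E)-\beta^*K_W)^4$, using the Grothendieck relation repeatedly to reduce every monomial to the form $\xi\cdot(\text{class from }W)$ of top degree, and then applying $\beta_*$; because $W$ has dimension $3$, only terms of total degree $4$ in $P$ (hence degree $3$ in $W$ after pushing forward) survive, which already forces the answer to be a combination of $K_W^3$, $K_W\cdot c_1(\mathcal E)^2$ and $K_W\cdot c_2(\mathcal E)$. Part (ii) is the same bookkeeping applied to $K_P^2\cdot c_2(P)$, now also producing the term $K_W\cdot c_2(W)$ from the $\beta^*c_2(W)$ summand. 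Part (iii) then follows purely formally: plug the results of (i) and (ii) into the Hirzebruch--Riemann--Roch formula \eqref{eq4} for the fourfold $P$, use that $\chi(\mathcal O_P)=\chi(\mathcal O_W)$ (since $R^q\beta_*\mathcal O_P=0$ for $q>0$ and $\beta_*\mathcal O_P=\mathcal O_W$), and collect terms; the stated coefficients $6$, $-\tfrac12\cdot3$, $-\tfrac13$ are exactly what the $\tfrac1{12}(2K_P^4+K_P^2\cdot c_2(P))$ combination yields.

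The only real obstacle is arithmetic discipline: the fourth-power expansion in part (i) has many terms, and one must be scrupulous about the normalisation of $\xi$ (whether $\mathcal O(1)$ is defined so that its sections are $\mathcal E$ or $\mathcal E^\vee$, which flips signs in $c_1(\mathcal E)$) and about the signs in the relative Euler sequence. A clean way to sidestep sign ambiguities is to choose the convention in which $\mathcal O_P(-K_{P/W})=\mathcal O_P(2)\otimes\beta^*\det\mathcal E^\vee$ matches the known formula for $\proj1$-bundles, verify it on the trivial bundle $\mathcal E=\mathcal O_W^{\oplus2}$ (where $P=W\times\proj1$ and every quantity is a product and can be checked directly), and then trust the symbolic computation. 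I would also sanity-check the final formulas against Proposition \ref{Prop.5} via the identity, valid for $Y=\Proj(\mathcal O_Z\oplus\mathcal O_Z(a))$ in Remark \ref{re1}, that blowing up and the two projective bundle structures must give consistent values of $K_X^4$; this cross-check catches any stray factor. No deep input is needed beyond standard intersection theory on projective bundles and Hirzebruch--Riemann--Roch, both already cited in the excerpt.
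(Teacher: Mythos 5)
Your proposal follows essentially the same route as the paper: write $K_{\Proj(\mathcal E)}=\beta^*(K_W+c_1(\mathcal E))-2\xi$, reduce powers of $\xi$ via the Grothendieck relation (equivalently, push forward to Segre classes), obtain $c_2(\Proj(\mathcal E))=\beta^*K_W\cdot(\beta^*c_1(\mathcal E)-2\xi)+\beta^*c_2(W)$ from the relative tangent sequence (the paper cites Fulton, Example 3.2.11, for the same identity, and note $c_2(\mathcal T_{P/W})=0$ since it is a line bundle), and deduce (iii) from the Riemann--Roch formula \eqref{eq4}. The only differences are cosmetic (pushforward/Segre-class bookkeeping versus listing the intersection numbers directly), so the argument is correct and matches the paper's proof.
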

\begin{proof}
Let $D$ be the divisor associated to $\det (\mathcal E)$, and $\xi$ be the divisor associated to $\bigO{\Proj ({\mathcal E})}(1)$.
We have
$\sum_{i=0}^2 (-1)^i \beta ^*c_i(\mathcal E) \cdot \xi^{(2-i)}=0$, which yields
\[\xi^2=\beta^*D \cdot \xi-\beta^*c_2(\mathcal E).\]
Then:
\begin{itemize}\setlength\itemsep{.25em}
    \item $\beta^*(K_W + D)^4=0$;
    \item $\beta^*(K_W + D)^3 \cdot \xi=(K_W + D)^3$;
    \item $\beta^*(K_W + D)^2 \cdot \xi^2=\beta^*(K_W+D)^2 \cdot \Bigl(\beta^*D \cdot \xi-\beta^*c_2(\mathcal E)\Bigr)=(K_W + D)^2 \cdot D$;
    \item $\beta^*(K_W + D) \cdot \xi^3=\beta^*(K_W + D) \cdot \xi \cdot \Bigl(\beta^*D \cdot \xi-\beta^*c_2(\mathcal E)\Bigr)=  (K_W+D) \cdot D^2 -(K_W +D) \cdot c_2(\mathcal E)$;
    \item $\xi^4=\Bigl(\beta^*D \cdot \xi-\beta^*c_2(\mathcal E)\Bigr)^2=D^3-2D \cdot c_2(\mathcal E)$.
\end{itemize}
We recall that $K_{\Proj(\mathcal E)}=\beta^*(K_W+D)-2\xi$, therefore
\[K_{\Proj(\mathcal E)}^4=\sum_{i=0}^4 \binom{4}{i} \beta^*(K_W +D)^i \cdot (-2\xi)^{(4-i)},\]
and we obtain $(i)$.

To prove $(ii)$, we need to compute $c_2(\Proj(\mathcal E))$. By \cite[Example 3.2.11]{Ful}, we get
\[c_2(\Proj(\mathcal E))=\beta^* K_W \cdot (\beta^*D-2\xi)+\beta^*c_2(W).\]
So, $K_{\Proj(\mathcal E)}^2 \cdot c_2(\Proj(\mathcal E))=\Bigl(\beta^*(K_W+D) -2\xi \Bigr)^2 \cdot \Bigl(\beta^* K_W \cdot (\beta^*D-2\xi)+\beta^*c_2(W)\Bigr)$, which gives $(ii)$.
To get $(iii)$, just apply \eqref{eq4}.\qedhere
\end{proof}

We are now able to compute the numerical invariants of $X$ under consideration.
\begin{lm}
Let $X$ as in Corollary \ref{co1}, and let $\delta=H^3$. Then:
\begin{itemize}\setlength\itemsep{.25em}
	\item[(i)] $K_X^4=8\delta i_Z(a^2+i_Z^2) -3d\delta (a+i_Z)^2+2d\delta (a+i_Z)(d-i_Z) +ad^2\delta -d\delta (d-i_Z)^2$,
	\item[(ii)] $K_X^2 \cdot c_2(X)=84 + 2\delta i_Z(a^2+i_Z^2) -12 \hodge 02A +2d\delta (d-i_Z)(a+d)-2ad^2\delta$,
	\item[(iii)] $\chi (\bigO X(-K_X))=8+\frac{3}{2}\delta i_Z(a^2 + i_Z^2)-\hodge 02A -\frac{1}{2}d\delta (a+i_Z)(a-d+2i_Z)$.
\end{itemize}
\end{lm}
\begin{proof}
Recall that in the setting of Remark \ref{re1} $\mathcal E=\bigO Z \oplus \bigO Z(a)$, therefore $\det (\mathcal E)= \bigO Z(a)$, $c_1(\mathcal E)=aH$, and $c_2(\mathcal E)=0$. Since $Z$ is a Fano threefold, the Riemann-Roch Theorem \cite[Appendix A, Exercise 6.7]{Har} applied to the sheaf $\bigO Z$ yields $K_Z \cdot c_2(Z)=-24$. Moreover, the exact sequence
\[0 \to \normal S{\widehat G_Y} \to \normal{S}{Y} \to {\normal {\widehat G_Y}Y}_{|S} \to 0\]
corresponds to
\[0 \to \bigO A(dH_{|A}) \to \normal{S}{Y} \to \bigO A(aH_{|A}) \to 0,\]
and so 
\begin{enumerate}\setlength\itemsep{.25em}
    \item[(i)] $c_1(\normal{S}{Y})=(a+d)H_{|A}$, under the natural isomorphism $S \cong A$,
    \item[(ii)] $c_2(\normal{S}{Y})=ad^2\delta$.
\end{enumerate}
Furthermore, it is not difficult to verify that $K_{Y|S}=-(a+i_Z)H_{|A}$, $K_S=(d-i_Z)H_{|A}$, thus
\begin{gather*}
K_{Y|S}^2=d\delta (a+i_Z)^2, \quad K_S^2=d\delta (d-i_Z)^2, \\
K_S \cdot K_{Y|S}=-d\delta (a+i_Z)(d-i_Z).
\end{gather*}
Finally, Proposition \ref{Prop.5} yields 
\begin{itemize}\setlength\itemsep{.25em}
	\item[(i)] $K_Y^4=8\delta i_Z(a^2+i_Z^2)$,
	\item[(ii)] $K_Y^2 \cdot c_2(Y)=2\delta i_Z(a^2+i_Z^2) +96$,
	\item[(iii)] $\chi (\bigO Y(-K_Y))=9 + \frac{3}{2} \delta i_Z(a^2 + i_Z^2)$,
\end{itemize} 
and Proposition \ref{Prop.6} gives the statement.
\end{proof}
The explicit invariants for the varieties of Corollary \ref{co1} are collected in Table $2$, page \pageref{table2}.

\section{Deformations and automorphism groups}
\label{section4}
In this section we show that the varieties of Corollary \ref{co1} form indeed 28 distinct families of deformations, and we provide some partial results on the dimension of their Kuranishi family by giving an upper bound of $h^1(\mathcal T_X)$. This also leads to an upper bound of $h^0(\mathcal T_X)$, and precise results for some of the families. Recall that $h^0(\mathcal T_X)$ is the dimension of ${\rm Aut}(X)$, and that $h^2(\mathcal T_X)=0$ by the Nakano vanishing theorem \cite[Theorem 4.2.3]{Laz}. This implies that the base of the Kuranishi family of $X$ is smooth of dimension $h^1(\mathcal T_X)$, i.e.\ deformations of $X$ are unobstructed.

By a smooth family of Fano varieties we mean a smooth, projective morphism $f \colon \mathcal X \to S$ with connected fibres, and $S$ an irreducible quasi-projective variety, such that the anticanonical divisor $-K_{\mathcal X}$ is Cartier and $f$-ample.

\begin{remark}
The construction of $X$, as in Remark \ref{re1}, can be also done in families. Given $g \colon \mathcal Z \to B$ a smooth family of Fano threefolds with Picard number 1 and index $i_Z > 1$, and given $\bigO {\mathcal Z}(\mathcal H) \in \Pic {\mathcal Z}$ such that the restriction $\bigO {Z_b}(\mathcal H_{|_{Z_b}})$ to the fibre $Z_b$ is the generator of $\Pic {Z_b}$ for all $b \in B$, it is possible to construct a smooth family of Fano fourfolds $f \colon \mathcal X \to \widetilde B$, where $\widetilde B$ is the open subset of $\Proj_B \Bigl(\bigl(g_* \bigO {\mathcal Z}(d \mathcal H)\bigr)^{\vee}\Bigr)$ which parametrises the smooth surfaces $A_b \in |d\mathcal H_{|_{Z_b}}|$ for all $b \in B$, so that the fibre of $f$ over $\widetilde b \in \widetilde B$ is obtained from $(Z_b, A_b)$ as in Remark \ref{re1}.
\end{remark}

Let us fix a smooth Fano threefold $Z_i$, $i \in \{1, \dots, 7\}$, and integers $a, d$ satisfying \eqref{eq3}, so that the varieties $X^i_{a,d}$ of Corollary \ref{co1} are parametrised by couples $(Z, A)$, with $Z$ smooth and deformation equivalent to $Z_i$, and $A \in |\bigO Z(d)|$ a smooth surface. In this section we will use the notation $X(Z,A)$ to denote the Fano fourfold given by the couple $(Z,A)$.

Our first goal is to prove that if $f \colon \mathcal X \to S$ is a smooth family of Fano varieties, and if there exists $s_0 \in S$ such that its fibre $X_{s_0}$ is isomorphic to $X(Z_{s_0},A_{s_0})$, then all the fibres $X_s$ are isomorphic to $X^i_{a,d}$ for some $(Z_s, A_s)$, that is $X_s \cong X(Z_s, A_s)$. We are going to prove this by looking at the behaviour of the nef cone $\Nef {X_s}$, and the corresponding contractions, under the deformation $f$. Wiśniewski \cite[Theorem 1]{Wis2} proved that, in the case of a smooth family of Fano varieties, the nef cone is locally constant. Moreover, generalised results in the setting of singular varieties were proved in \cite{dFH} and \cite{CFST}.

\begin{pr}
\label{prop4.1}
Let $f \colon \mathcal X \to S$ be a smooth family of Fano varieties. Assume that there exists $s_0 \in S$ such that the fibre $X_{s_0}$ is isomorphic to $X^i_{a,d}$. Then, any fibre $X_s$ is one of the varieties of Corollary \ref{co1}, and belongs to the same family of $X_{s_0}$.
\end{pr}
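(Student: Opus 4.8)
The plan is to use the local constancy of the nef cone in a smooth family of Fano varieties (Wiśniewski, \cite[Theorem 1]{Wis2}) to transport the entire configuration of contractions from $X_{s_0}$ to a neighbouring fibre $X_s$, and then to reconstruct the triple $(Z_s, A_s)$ from that configuration. First I would reduce to the case where $S$ is a small analytic disc around $s_0$ and both fibres have the same Picard number $\rho = 3$, so that $\mathcal N^1(X_{s_0}) \cong \mathcal N^1(X_s)$ canonically (the family being smooth, $H^2$ is locally constant and all classes are of type $(1,1)$). By \cite[Theorem 1]{Wis2} the nef cone $\Nef{X_s}$ is the same polyhedral cone as $\Nef{X_{s_0}}$ under this identification, so $X_s$ has exactly the same extremal rays, with the same numerical types, as described in Remark \ref{re3}: one fibre-type extremal ray $R_1$ whose contraction $\varphi_s \colon X_s \to Z_s$ is a conic bundle over a smooth threefold $Z_s$ of Picard number $1$, two divisorial rays $\tau, \widehat\tau$ contracting divisors $E_s, \widehat E_s$ down to smooth surfaces inside $\proj1$-bundles $Y_s, \widehat Y_s$ over $Z_s$, and one or two further divisorial rays $\eta$ (and $\widehat\eta$) sending a prime divisor $G_s$ (resp.\ $\widehat G_s$) to a point. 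Since the type of each contraction (fibre vs.\ divisorial, and the nature of the exceptional locus and image) is a discrete invariant that cannot jump in a smooth family — here one should cite the deformation-invariance of contractions, e.g.\ via \cite[Theorem 1]{Wis2} together with the fact that the contraction morphisms fit into a morphism over $S$ — the fibre $X_s$ carries the divisor $G_s \cong Z_s$ with $\rho_{G_s} = 1$, hence $\dim\mathcal N_1(G_s, X_s) = 1$.

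Next I would apply the converse part of Corollary \ref{co1} (equivalently \cite[Theorem 3.8]{CD}) directly to $X_s$: being a smooth Fano $4$-fold with $\rho = 3$ containing a prime divisor of $\dim\mathcal N_1 = 1$, it is isomorphic to some $X^j_{a',d'}$, i.e.\ $X_s \cong X(Z_s, A_s)$ for a smooth Fano $3$-fold $Z_s$ with $\rho_{Z_s} = 1$ and integers $a', d'$ satisfying \eqref{eq3}. It remains to show that the index $j$ and the pair $(a',d')$ agree with those of $X_{s_0}$. The threefold $Z_s$, being the base of the conic bundle $\varphi_s$, is deformation equivalent to $Z_{s_0} = Z_i$ (the $Z_s$ vary in a family over $S$, namely the Stein factorisation of $\varphi \colon \mathcal X \to S$ restricted over the $R_1$-contraction), and since the Fano $3$-folds of Table 1 are pairwise non–deformation-equivalent (they have different invariants $H^3$, $-K_Z^3$, etc.), we get $j = i$. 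Finally, the integers $a, d$ are read off from intersection numbers that are locally constant in the family: for instance $d = \widehat G_{s}\cdot G_{s}\cdot (\varphi_s^*H_s)^2$-type expressions, or more cleanly, $d$ is determined by the class of the blow-up centre (the surface $A_s \in |\bigO{Z_s}(d)|$), and $a$ by the normal bundle $\normal{G_s}{X_s} \cong \bigO{Z_s}(-a)$; both $d$ and $a$ are integers depending continuously on $s$, hence constant on the connected disc. Therefore $X_s \cong X^i_{a,d}$, i.e.\ $X_s$ lies in the same family as $X_{s_0}$. A standard connectedness argument over all of $S$ then gives the statement for arbitrary $s \in S$.

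The main obstacle I expect is the second step: justifying rigorously that the \emph{type} of each extremal contraction — and in particular the existence of the prime divisor $G_s$ with $\rho_{G_s}=1$ on the deformed fibre — is preserved. Local constancy of $\Nef{X_s}$ as a cone is not quite enough by itself; one needs that the relative contraction of each extremal face exists over $S$ and has fibres of locally constant dimension, so that the fibrewise contractions deform. This is exactly the content that \cite[Theorem 1]{Wis2} (and its refinements in \cite{dFH, CFST}) is designed to supply — the contractions of a smooth family of Fano varieties form a family of contractions — so the argument goes through, but this is the point that must be stated carefully rather than waved through. Everything after that (matching $i$ via Table 1, matching $(a,d)$ via locally constant integer-valued intersection numbers) is routine.
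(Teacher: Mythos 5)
Your proposal is correct and follows essentially the same route as the paper: constancy of $b_2=\rho$ on the fibres, deformation-invariance of the extremal contractions and of their types (the delicate point you rightly flag, handled in the paper via \cite[Thm.\ 2.2, Thm.\ 2.12]{CFST}, \cite[Prop.\ 1.3]{Wis1} and \cite[Thm.\ 4.1]{dFH} after killing the monodromy by a finite étale cover), hence on every fibre a divisorial contraction sending a prime divisor $G_s$ to a point, so $\dim \mathcal N_1(G_s,X_s)=1$, and then \cite[Theorem 3.8]{CD}. The only (cosmetic) difference is the last step: the paper pins down the family using the deformation-invariant numerical invariants of Table 2, which separate the 28 families, whereas you reconstruct $(Z_i,a,d)$ directly --- note that your sample expression $\widehat G_s \cdot G_s \cdot (\varphi_s^*H)^2$ vanishes since $G \cap \widehat G = \emptyset$, but your cleaner alternative via the normal bundle of $G_s$ and the class of the blow-up centre does work.
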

\begin{proof}
\textbf{Step 1.} We can assume $\dim S=1$, and up to pull-back to the normalisation of $S$, we can assume $S$ smooth. Since the fibres of a smooth family are all diffeomorphic, we have that the second Betti number is constant on the fibres of $f$; moreover, for smooth Fano varieties, the second Betti number coincides with the rank of the Picard group. Therefore the fibres $X_s$ are smooth Fano fourfolds of Picard number 3, for all $s \in S$.

\textbf{Step 2.} In the following we adopt an argument in \cite{CFST} to show that the fibres of $f$ have the same type of contractions.

By \cite[Theorem 2.2]{CFST}, the monodromy action (see \cite[Section 3]{Voi}) of $\pi_1(S, s)$ on $H^2(X_s, \mathbb Q)$ is finite. As in the proof of \cite[Theorem 2.7]{CFST}, we may take a finite étale cover $g \colon V \to S$ and the pull-back $\mathcal X_V= \mathcal X \times_S V$ of $f$ via $g$. The resulting family $f_V \colon \mathcal X_V \to V$, which is still a deformation of ${X_{s_0}}$, now has trivial monodromy action on $H^2(X_t, \mathbb Q)$ for all $t \in V$. By \cite[Theorem 2.12]{CFST}, the restriction map
\[\mathcal N^1(\mathcal X_V) \to \mathcal N^1(X_t)\]
is surjective for all $t \in V$. As a consequence, the map 
\[\mathcal N^1(\mathcal X_V \slash V) \to \mathcal N^1(X_t)\]
is an isomorphism for all $t \in V$, where $\mathcal N^1(\mathcal X_V \slash V)$ is the real vector space of divisor with real coefficients, modulo numerical equivalence on the curves contracted by $f_V$.

Now, it follows from \cite[Proposition 1.3]{Wis1} that every elementary contraction of a fibre $X_t \to W_t$ extends to a relative elementary contraction $\mathcal X_V \to \mathcal W$, and conversely every relative elementary contraction $\mathcal X_V \to \mathcal W$ restricts to an elementary contraction $X_t \to W_t$ on every fibre. Therefore, \cite[Theorem 4.1]{dFH} implies that $X_{t_1} \to W_{t_1}$ is of fibre type (resp. divisorial, small) if and only if $X_{t_2} \to W_{t_2}$ of fibre type (resp. divisorial, small), for any $t_1, t_2 \in V$. By applying a similar technique as in the proof of \cite[Theorem 2.7]{CFST}, we see that the contractions on the fibres are deformation equivalent.

\textbf{Step 3.} By Step 2 and Remark \ref{re3}, we get that all the fibres $X_t$ of $f_V$ have an elementary divisorial contraction sending a divisor to a point, which implies that they all contain a divisor $G_t$ (the exceptional divisor of such contraction) with $\dim \mathcal N_1(G_t, X_t)=1$. Then, by \cite[Theorem 3.8]{CD}, each fibre $X_t$ is one of the varieties of Corollary \ref{co1}, and since the numerical invariants in Table 2 are constant under deformation and are different amongst the families, we can conclude that each $X_t$ is in the same family of $X_{s_0}$.
\end{proof}

\begin{co}
\label{co4.2}
The varieties of Corollary \ref{co1} form 28 families of deformations. They are all distinct, and do not have common members.
\end{co}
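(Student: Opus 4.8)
The plan is to deduce this directly from Corollary \ref{co1} and Proposition \ref{prop4.1}, together with the computation of the Hodge numbers and the anticanonical invariants from Section \ref{section3}. First, for each triple $(Z_i, a, d)$ satisfying \eqref{eq3}, I would observe that the set of couples $(Z, A)$ with $Z$ deformation equivalent to $Z_i$ and $A \in |\bigO Z(d)|$ a smooth surface is parametrised by an irreducible variety — namely the relevant open subset $\widetilde B$ of the projective bundle $\Proj_B((g_* \bigO{\mathcal Z}(d\mathcal H))^\vee)$ described in the Remark preceding Proposition \ref{prop4.1}, where $g \colon \mathcal Z \to B$ is a smooth family of Fano 3-folds with $B$ irreducible whose fibres are the deformations of $Z_i$ (each $Z_i$ for $i \neq 1$ is rigid, so $B$ is a point there, while $Z_1$ has a nontrivial but irreducible deformation space; in either case $B$ is irreducible). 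Since $\widetilde B$ is irreducible and the construction of Remark \ref{re1} yields a smooth family $f \colon \mathcal X \to \widetilde B$ of Fano 4-folds, all members $X^i_{a,d}$ are deformation equivalent. This establishes that Corollary \ref{co1} gives at most $28$ families.

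Next, to see that the $28$ families are genuinely distinct and share no common members, I would argue by contradiction: suppose $X$ is a smooth Fano $4$-fold isomorphic both to some $X^i_{a,d}$ and to some $X^j_{a',d'}$. By Corollary \ref{co1} (and \cite[Theorem 3.8]{CD}) every smooth Fano $4$-fold with $\rho_X = 3$ containing a prime divisor $D$ with $\dim \mathcal N_1(D,X)=1$ arises this way, and by Proposition \ref{prop4.1} the entire family of $X^i_{a,d}$ consists of such varieties with the same discrete invariants. So it suffices to distinguish the families by numerical invariants that are constant in each family. The Hodge numbers $\hodge 12X$, $\hodge 13X$, $\hodge 22X$ computed in Section \ref{section3} depend only on $(Z, d)$, while $K_X^4$ and $h^0(\bigO X(-K_X))$, also computed there, depend on $(Z, a, d)$; these are all deformation invariants (Hodge numbers by e.g.\ Ehresmann, and $K_X^4$ and $\chi(\bigO X(-K_X))$ because $-K_{\mathcal X}$ is relatively Cartier and the family is smooth). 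A finite case check over the $28$ triples — which I would record in Table 2 — shows that the pair (Hodge numbers, $K_X^4$) already separates all $28$ families. Hence no two families overlap.

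The only subtlety is the bookkeeping: one must verify that the tuple of invariants listed in Table 2 really does take $28$ distinct values. The index $i_X = 1$ is the same for all of them by Remark \ref{re2}(i), so it is useless for separation; instead one relies on $(\hodge 12X, \hodge 13X, \hodge 22X, K_X^4)$. Since $\hodge 12X = \hodge 12Z$ and $\hodge 13X = \hodge 02A = h^3(\bigO Z(-d))$ together already pin down $Z$ in most cases and constrain $d$, and since for fixed $(Z,d)$ the value of $K_X^4$ is a strictly monotone function of $a$ in the admissible range (as one reads off from the explicit formula $K_X^4 = 8\delta i_Z(a^2+i_Z^2) - 3d\delta(a+i_Z)^2 + 2d\delta(a+i_Z)(d-i_Z) + ad^2\delta - d\delta(d-i_Z)^2$), the separation goes through; the few coincidences in Hodge numbers between different $Z_i$ are resolved by $K_X^4$. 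I expect this final verification — checking injectivity of the invariant-tuple across all $28$ cases — to be the main (though entirely routine) obstacle, and it is carried out in the tables of Section \ref{section7}.
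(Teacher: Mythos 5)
Your proposal follows essentially the same route as the paper: deformation equivalence within each family comes from the relative construction over the irreducible base $\widetilde B$ (the paper's remark at the start of Section \ref{section4}), and distinctness/no common members comes from Proposition \ref{prop4.1} together with the fact that the deformation-invariant quantities of Table 2 differ amongst the 28 families. So the structure is correct; however, two of your auxiliary claims are false, even though neither breaks the argument. First, it is not true that $Z_i$ is rigid for all $i \neq 1$: by Table 1, $h^1(\mathcal T_{Z_i})$ equals $19$, $10$, $3$ for $i=2,3,4$. What you actually need is only that the deformations of each $Z_i$ are parametrised by an irreducible base $B$ (these are cyclic covers, cubics, and intersections of quadrics, all parametrised by irreducible families), so the conclusion that $\widetilde B$ is irreducible stands, but not for the reason you give. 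Second, $K_X^4$ is not a strictly monotone function of $a$ for fixed $(Z,d)$: it is a quadratic in $a$ with positive leading coefficient, and e.g.\ for $Z_7$, $d=2$ one gets $376$, $350$, $454$ for $a=0,1,3$. The separation of the families is nevertheless fine, because (as you also say, and as the paper does) it is verified by direct inspection of the invariants in Table 2 — for instance the single coincidence $K_X^4=260$ between $X^6_{1,2}$ and $X^7_{1,4}$ is resolved by $h^{1,3}$ and $h^{2,2}$ — so the monotonicity claim should simply be dropped rather than repaired.
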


The next step is to understand when the varieties of the same family are isomorphic.

\begin{pr}
\label{prop4.2}
Let $X=X(Z,A)$ and $X'=X(Z', A')$. Then: $X \cong X'$ if and only if there exists an isomorphism $\psi \colon Z \to Z'$ such that $\psi(A)=A'$.
\end{pr}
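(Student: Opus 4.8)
The plan is to prove both implications. The "if" direction is essentially formal: given an isomorphism $\psi \colon Z \to Z'$ with $\psi(A) = A'$, I would note that $\psi$ induces an isomorphism of the $\proj 1$-bundles $Y = \Proj(\bigO Z \oplus \bigO Z(a)) \to Y' = \Proj(\bigO {Z'} \oplus \bigO {Z'}(a))$ compatible with the projections $\pi, \pi'$ and carrying $\widehat G_Y$ to $\widehat{G}_{Y'}$ (since $\psi^*\bigO{Z'}(1) \cong \bigO Z(1)$ and these sections correspond to the canonical quotients). Consequently it carries $S = \widehat G_Y \cap \pi^{-1}(A)$ to $S' = \widehat{G}_{Y'} \cap (\pi')^{-1}(A')$, and hence lifts to an isomorphism of the blow-ups $X \to X'$ by the universal property of blowing up.

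The substantive direction is "only if". Suppose $\phi \colon X \xrightarrow{\sim} X'$. The key point is that $\phi$ must respect the whole structure described in Remark \ref{re3}: it induces a linear isomorphism $\mathcal N^1(X) \to \mathcal N^1(X')$ sending $\Nef X$ to $\Nef{X'}$, hence matching up extremal rays and the corresponding contractions. In particular, the unique fibre-type contraction $R_1$ of $X$ (the conic bundle $\varphi \colon X \to Z$) must be carried to the fibre-type contraction of $X'$ (the conic bundle $\varphi' \colon X' \to Z'$). Since a contraction is determined by its associated ray, $\phi$ descends to an isomorphism $\bar\phi \colon Z \to Z'$ with $\varphi' \circ \phi = \bar\phi \circ \varphi$. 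This is the heart of the argument. I would also record, via the numerical invariants of Table 2 being deformation-invariant and distinct across families (Corollary \ref{co4.2}), that $X$ and $X'$ lie in the same family, so the integers $a, d$ (equivalently $\{a, d-a\}$) agree.

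It remains to see that $\bar\phi(A) = A'$. For this I would use that $\phi$ also matches the two divisorial contractions sending a divisor to a point — the contractions of $\eta, \widehat\eta$ whose exceptional divisors are $G, \widehat G$ — so $\phi(\{G, \widehat G\}) = \{G', \widehat{G'}\}$; each of these divisors maps isomorphically onto $Z$ (resp. $Z'$) under $\varphi$ (resp. $\varphi'$), compatibly with $\bar\phi$. Then $\phi$ matches the divisorial contractions of $\tau, \widehat\tau$, i.e. the blow-downs $\sigma \colon X \to Y$ and $\widehat\sigma \colon X \to \widehat Y$, identifying the exceptional divisor $E$ (or $\widehat E$) of $X$ with that of $X'$; the blow-up centre $S \subset \widehat G_Y$ is the image $\sigma(E)$, and under the identification $\widehat G_Y \cong Z$, $G_Y \cong Z$ one has $S = A$ (as a divisor in $Z$ via $\widehat G_Y \cap \pi^{-1}(A)$, equivalently $\widehat S = A$ via $\widehat{G} \cong Z$). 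Chasing $\bar\phi$ through $\varphi\big|_{\widehat G} \colon \widehat G \xrightarrow{\sim} Z$ and its primed counterpart shows $\bar\phi(A) = A'$, so $\psi := \bar\phi$ works.

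The main obstacle is Step corresponding to descending $\phi$ to $Z$: one must argue carefully that the isomorphism of nef cones genuinely identifies the $R_1$-rays — this needs that $R_1$ is the \emph{only} fibre-type ray on both sides, which is exactly the content of Remark \ref{re3} — and that matching of rays forces matching of the Stein factorisations, so that $\phi$ induces a well-defined morphism on the targets. After that, tracking the centre $A$ through the chain of contractions is bookkeeping, using the explicit descriptions of $G$, $\widehat G$, $E$ and the identifications $G \cong \widehat G \cong Z$ from Remark \ref{re1}.
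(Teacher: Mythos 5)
Your proposal is correct, and its core coincides with the paper's proof: the ``if'' direction is handled the same way (lift $\psi$ through the $\proj 1$-bundle and the blow-up), and for the ``only if'' direction the paper also uses that $X$ admits a unique fibre-type contraction (Remark \ref{re3}), so the conic bundles $\varphi$ and $\varphi'$ get identified and the isomorphism descends to $\psi \colon Z \to Z'$ (the paper invokes \cite[Prop.\ 1.14(b)]{Deb} to produce $\psi$). The only genuine difference is the final step: where you recover $A$ by matching the remaining divisorial contractions and chasing the exceptional divisors $E$, $\widehat E$ (whose images under $\varphi$ are exactly $A$) through the diagram, the paper gets $\psi(A)=A'$ in one line by noting that $A$ is the discriminant divisor of the conic bundle $\varphi$ -- the singular fibres are precisely those over $A$ -- and the discriminant is intrinsic to $\varphi$, hence carried onto the discriminant $A'$ of $\varphi'$ by the commutative square. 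Your chase does work (and is insensitive to whether the isomorphism swaps the two blow-down structures, since both $E$ and $\widehat E$ map onto $A$ under $\varphi$), but the discriminant observation eliminates exactly the bookkeeping you flag at the end; the descent step you identify as the main obstacle is justified the same way in both arguments.
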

\begin{proof}
Assume first that $X \cong X'$. Let $\varphi$ and ${\varphi}'$ be the conic bundles of Remark \ref{re1}, and let $\rho$ be the composition of ${\varphi}'$ with the isomorphism $X\cong X'$. The cones $\NE {\varphi}$ and $\NE {\rho}$, which are the convex cones in $\NE X$ generated by the curves contracted by $\varphi$ and $\rho$, must coincide, since they are both fibre-type contractions of $X$, and $X$ admits only one such contraction by Remark \ref{re3}. Therefore, by \cite[Proposition 1.14(b)]{Deb}, there exists a unique isomorphism $\psi \colon Z \to Z'$ which makes the following diagram commute.
\begin{center}
\begin{tikzpicture}
 \matrix (m) [matrix of math nodes,row sep=4.5em,column sep=4.5em,minimum width=2em]
  {
     X & X' \\
     Z & Z' \\ };
  \path[-stealth]
    (m-1-1) edge node [above] {$\cong$} (m-1-2)
    (m-1-1) edge node [left] {$\varphi$} (m-2-1)
    (m-1-2) edge node [right] {${\varphi}'$} (m-2-2)
    (m-2-1) edge node [below] {$\psi$} (m-2-2)
    (m-1-1) edge node [above] {$\rho$} (m-2-2)
    ;
\end{tikzpicture}
\end{center}
Since the diagram commutes, the discriminant divisor of $\varphi$ 
\[\Delta:=\{z \in Z| \varphi^{-1}(z) \text{ is singular}\}\]
maps via $\psi$ onto the discriminant divisor ${\Delta}'$ of ${\varphi}'$. In fact $\Delta = A$ and $\Delta' = A'$, and we get the statement,

Conversely, if there exists and isomorphism $\psi \colon Z \to Z'$ such that $\psi (A) = A'$, for smooth surfaces $A, A' \in |\bigO Z(d)|$, then $\psi$ lifts as an isomorphism between $X(Z, A)$ and $X(Z', A')$.
\end{proof}

At this point it is clear, by the previous results, that the dimension of the base of the Kuranishi family of $X^i_{a,d}$ is at most $h^1(\mathcal T_{Z_i}) + h^0(\bigO {Z_i}(d)) -1$.
\begin{pr}
\label{prop4.3}
Let $X=X^i_{a,d}$ be a Fano fourfold of Corollary \ref{co1}. Then:
\begin{equation}
\label{4.1}
h^1(\mathcal T_X) \leq h^1(\mathcal T_{Z_i}) + h^0(\bigO {Z_i}(d)) -1.
\end{equation}
Furthermore:
\begin{itemize}\setlength\itemsep{.25em}
\item[(i)] if $i=1,\dots,4$, then equality holds;
\item[(ii)] if $i=7$ and $d=1,2$, then $X$ is rigid, that is $h^1(\mathcal T_X)=0$.
\end{itemize}
\end{pr}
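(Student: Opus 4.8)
The plan is to relate the deformation theory of $X = X^i_{a,d}$ to that of the pair $(Z, A)$ via the explicit birational/bundle structure $\sigma \colon X \to Y = \Proj(\mathcal O_Z \oplus \mathcal O_Z(a))$ and $\pi \colon Y \to Z$, and then to identify $h^1(\mathcal T_X)$ with a sum of contributions that we can control. First I would use the blow-up sequence $0 \to \mathcal T_X \to \sigma^* \mathcal T_Y \to \iota_*\mathcal N_{E/X}(1) \to 0$ (or the standard exact sequences relating $\mathcal T_X$, $\sigma^*\mathcal T_Y$ and the tangent sheaf of the center $S \cong A$) together with $R^\bullet\sigma_*$ and the Leray spectral sequence to push the computation down to $Y$; concretely one expects $H^1(\mathcal T_X)$ to sit in an exact sequence involving $H^1(\mathcal T_Y)$, $H^0(\mathcal N_{S/Y})$ and $H^1(\mathcal T_A)$. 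Then I would descend from $Y$ to $Z$ using the $\Proj^1$-bundle sequence $0 \to \mathcal T_{Y/Z} \to \mathcal T_Y \to \pi^*\mathcal T_Z \to 0$ and the fact that, since $a \le i_Z - 1 \le 3$, the relevant higher cohomology of $\mathcal O_Z(a)$ and its dual vanishes by Kodaira/Kawamata–Viehweg-type vanishing on the Fano 3-fold $Z$, so that $H^j(\mathcal T_Y)$ reduces to $H^j(\mathcal T_Z)$ plus an easily computed bundle term. The upshot should be a surjection (or a short exact sequence) realizing the right-hand side $h^1(\mathcal T_{Z_i}) + h^0(\mathcal O_{Z_i}(d)) - 1$ as the dimension of the space of first-order deformations of the pair $(Z, A)$ — the $h^1(\mathcal T_{Z_i})$ accounting for deformations of $Z$, the $h^0(\mathcal O_{Z_i}(d)) - 1 = \dim |\mathcal O_Z(d)|$ accounting for moving the surface $A$ — and $h^1(\mathcal T_X)$ being a quotient of it.

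To turn the bound into an equality in case (i), i.e.\ $i = 1, \dots, 4$, I would show that every first-order deformation of the pair $(Z, A)$ actually deforms $X$, equivalently that the natural map $H^0(\mathcal N_{S/Y}) \oplus H^1(\mathcal T_Z) \to H^1(\mathcal T_X)$ is injective, by checking that the obstruction/coboundary maps that could kill a deformation vanish. The cleanest route is to observe that Proposition~\ref{prop4.1} and Proposition~\ref{prop4.2} already exhibit a modular family over (an open subset of) $\Proj_B((g_*\mathcal O_{\mathcal Z}(d\mathcal H))^\vee)$ whose base has dimension exactly $h^1(\mathcal T_{Z_i}) + h^0(\mathcal O_{Z_i}(d)) - 1$ (using $h^1(\mathcal T_{Z_i})$ from Table~1 as the dimension of the Kuranishi base of $Z_i$, which is unobstructed for these $Z_i$), so the Kodaira–Spencer map of this family at a general point is injective; combined with the upper bound \eqref{4.1} this forces equality. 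Here one must invoke that $Z_1, \dots, Z_4$ have unobstructed deformations with $h^0(\mathcal T_{Z_i}) = 0$ (so the automorphisms do not cut down the parameter count) — these facts are exactly what is recorded in Table~1 with the references given there.

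For case (ii), $i = 7$ so $Z = \Proj^3$, and $d = 1, 2$: here $h^1(\mathcal T_{\Proj^3}) = 0$ and $h^0(\mathcal O_{\Proj^3}(d)) - 1 = \dim |\mathcal O_{\Proj^3}(d)|$ is $3$ or $9$, so \eqref{4.1} alone does not give rigidity and one genuinely needs to compute $h^1(\mathcal T_X)$ on the nose. I would run the two exact sequences above on $Y = \Proj(\mathcal O_{\Proj^3} \oplus \mathcal O_{\Proj^3}(a))$ with $a \le i_Z - 1 = 3$ and with $A \subset \Proj^3$ a hyperplane or a quadric, and show that the coboundary $H^0(\mathcal N_{S/Y}) \to H^1(\mathcal T_X)$ is actually zero — intuitively, because any deformation of the center $S = \widehat G_Y \cap \pi^{-1}(A)$ coming from moving $A$ inside $|\mathcal O_{\Proj^3}(d)|$ is realized by an automorphism of $Y$ (indeed $\mathrm{Aut}(\Proj^3)$, hence $\mathrm{Aut}(Y)$, acts transitively on hyperplanes and on smooth quadrics), so such deformations are trivial; and every first-order deformation of $X$ preserves the conic-bundle contraction $\varphi$ and its discriminant $A$ by the rigidity of the cone structure, so it must be induced from $\mathrm{Aut}(Y)$. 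Making the "realized by an automorphism'' step precise — i.e.\ showing $H^1(\mathcal T_X) = 0$ rather than merely that the obvious deformations are trivial — is the main obstacle, and I expect to handle it by a direct cohomology computation: split the relevant bundles on $Y$ using the projective bundle formula, reduce every term to $H^j(\Proj^3, \mathcal O(k))$ and $H^j(\Proj^3, \mathcal T_{\Proj^3}(k))$ for small $k$, and check these all vanish in the degrees needed. The small values of $d$ and of $a$ are what make this finite check go through.
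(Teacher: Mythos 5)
Your plan is workable in outline, but it is a genuinely different and much heavier route than the paper's, and as written it leaves the decisive steps unproved. The paper proves the whole proposition with essentially no cohomology of tangent sheaves: by Proposition \ref{prop4.1} every small deformation of $X=X^i_{a,d}$ is again of the form $X(Z',A')$ in the same family, and by Proposition \ref{prop4.2} the member determines the pair $(Z',A')$ up to isomorphism; since $Z'$ varies in a smooth $h^1(\mathcal T_{Z_i})$-dimensional Kuranishi family and $A'$ in $|\bigO {Z'}(d)|$, of dimension $h^0(\bigO {Z_i}(d))-1$, the (smooth, by $h^2(\mathcal T_X)=0$) Kuranishi base of $X$ is dominated by this parameter space, which is \eqref{4.1}. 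Your argument for (i) is exactly the paper's: $h^0(\mathcal T_{Z_i})=0$ makes ${\rm Aut}(Z_i)$ finite, so for fixed $Z'$ only finitely many $A'$ give isomorphic $4$-folds and the bound is attained (note that to get equality for \emph{every} member, not just a very general one, you should either argue pointwise as the paper does, or supplement your ``Kodaira--Spencer injective at a general point'' step with upper semicontinuity of $h^1(\mathcal T_{X_t})$ together with \eqref{4.1}).

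Two concrete soft spots remain in your version. First, for \eqref{4.1} the asserted ``surjection realizing the right-hand side'' is not automatic: granting $\sigma_*\mathcal T_X$ equals the sheaf of fields preserving $S$ and $R^{>0}\sigma_*\mathcal T_X=0$, your sequences give $h^1(\mathcal T_X)\le h^1(\mathcal T_Y)+h^0(\normal{S}{Y})-\dim \operatorname{im}\bigl(H^0(\mathcal T_Y)\to H^0(\normal{S}{Y})\bigr)$, and $h^0(\normal{S}{Y})=\bigl(h^0(\bigO Z(d))-1\bigr)+h^0(\bigO A(a))$ overshoots the target by $h^0(\bigO A(a))$, the directions moving $S$ off the section $\widehat G_Y$. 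To land on the stated bound you must show this excess is absorbed by the image of $H^0(\mathcal T_Y)$ (fibrewise automorphisms of $Y$ moving the section, using e.g.\ surjectivity of $H^0(\bigO Z(a))\to H^0(\bigO A(a))$, i.e.\ $h^1(\bigO Z(a-d))=0$) and that $h^1(\mathcal T_Y)\le h^1(\mathcal T_{Z})$; this is exactly the content hidden in your ``one expects''/``should be''. (A cleaner target for your machinery would be $h^1(\mathcal T_X)\le h^1$ of the log tangent sheaf of the pair $(Z,A)$, from which \eqref{4.1} and also (ii) follow.) Second, for (ii) the computation you single out as the main obstacle is unnecessary: the same mechanism you invoke in (i) already gives rigidity. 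By Propositions \ref{prop4.1} and \ref{prop4.2}, every small deformation of $X^7_{a,1}$ or $X^7_{a,2}$ is $X(\proj 3, A')$ with $A'$ a hyperplane, resp.\ a smooth quadric, and all such pairs are projectively equivalent, so all nearby fibres of the Kuranishi family are isomorphic to $X$; since the Kodaira--Spencer map of the Kuranishi family is injective at the base point, this forces $h^1(\mathcal T_X)=0$. That is the paper's one-line proof of (ii); your direct cohomological verification on $Y=\Proj(\bigO {\proj 3}\oplus\bigO {\proj 3}(a))$ would also work, but only after the same bookkeeping as above.
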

\begin{proof}
Inequality \eqref{4.1} follows from Propositions \ref{prop4.1} and \ref{prop4.2}. As for $(i)$, just observe that if $i=1,\dots,4$ then $h^0(\mathcal T_{Z_i})=0$ (see Table 1), so that ${\rm Aut}(Z_i)$ is finite. Therefore, for a given deformation $Z$ of $Z_i$ there are at most finitely many different choices of $A$ which yield isomorphic Fano fourfolds.

On the other hand, if $i=7$ and $d=1,2$ we are in the case of hyperplanes and smooth quadrics in $\proj 3$. Since they are all projectively equivalent to each other, it follows that $X^7_{a,1}$ and $X^7_{a,2}$ are rigid. This gives $(ii)$.
\end{proof}

We note that $\chi(\mathcal T_X)=h^0(\mathcal T_X)-h^1(\mathcal T_X)$ can be computed from the other invariants of $X$ (see Table 2) by the Hirzebruch-Riemann-Roch theorem, see for instance \cite[Lemma 6.25]{CCF}, which yields
\begin{equation*}
\chi(\mathcal T_X)=27 - 5h^0(\bigO X(-K_X)) + K_X^4 + 3 b_2(X) - \hodge 12X-\hodge 22X+ 3\hodge13X.
\end{equation*}
We can then estimate also $h^0(\mathcal T_X)$; we give the estimates in Table 3.

\section{Base locus of $-K_X$}
\label{section5}
In this section we study the base locus of the linear system $|-K_X|$, and show that it is empty for 26 of the 28 families of Corollary \ref{co1}, while it consists of at most two distinct points for the remaining cases. Moreover, we show that a general element of $|-K_X|$ is smooth. All the base loci are collected in Table 2.

To compute the base locus of $|-K_X|$, we give special effective divisors in the linear system.
We already know that $-K_X \equiv (i_Z-a) \varphi ^* H + 2 \widehat G + E \equiv  (i_Z + a - d) \varphi ^* H + 2 G + \widehat E$ (see Remark \ref{re2}).
Notice that $d \varphi^* H \equiv E + \widehat E$, and so, by \eqref{eqq1}, we have:
\begin{gather*}
G +a \varphi^*H \equiv \widehat G+E; \quad \widehat G +(d-a) \varphi^*H \equiv G + \widehat E
\end{gather*}
which yields
\begin{equation}
\label{eq4.1}
-K_X \equiv i_Z \varphi^*H + G+ \widehat G.
\end{equation}
\begin{pr}
\label{prop5.1}
Let $X$ be as in Corollary \ref{co1}. If $H$ is globally generated, then so is $-K_X$.
Otherwise, either $X \cong X^1_{0,1}$ and $|-K_X|$ has a unique base point, or $X \cong X^1_{1,2}$ and $|-K_X|$ has two base points. In all cases, a general element of $|-K_X|$ is smooth.
\end{pr}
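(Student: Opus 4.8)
# Proof Proposal for Proposition \ref{prop5.1}

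The plan is to exploit the decomposition $-K_X \equiv i_Z \varphi^*H + G + \widehat G$ from \eqref{eq4.1}, together with the alternative expressions $-K_X \equiv (i_Z-a)\varphi^*H + 2\widehat G + E$ and $-K_X \equiv (i_Z+a-d)\varphi^*H + 2G + \widehat E$ from Remark \ref{re2}. First I would settle the easy direction: if $\bigO Z(1)$ is globally generated, then so is $\varphi^*\bigO Z(1)$, since $\varphi$ is a morphism; and $G$, $\widehat G$ are effective prime divisors with $G \cap \widehat G = \emptyset$. So for any point $x \in X$, at least one of $G, \widehat G$ misses $x$, and combining a global section of $i_Z\varphi^*H$ not vanishing at $\varphi(x)$ with that divisor produces an element of $|-K_X|$ not containing $x$. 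Hence $-K_X$ is base-point free whenever $H$ is globally generated, which by Lemma \ref{lemma1} covers every family except those with $Z = Z_1$ and $d$ odd — i.e. $d=1$, forcing $a \in \{0,1\}$ by \eqref{eq3} (recall $i_{Z_1}=2$), so $X \cong X^1_{0,1}$ or $X^1_{1,2}$. Wait: for $d=1$ the only possibility under \eqref{eq3} with $a > d$ or $0 \le a \le d/2$ is $a=0$; and $X^1_{1,2}$ comes from $d=2$, $a=1$, where $\bigO{Z_1}(2)$ \emph{is} globally generated but $\bigO{Z_1}(1)$ is not, and $G, \widehat G$ both map to the base-point-carrying system $|H|$. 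So the two exceptional families are exactly those where $H$ fails to be globally generated \emph{and} this failure cannot be absorbed.

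For the exceptional cases I would argue directly. Write $P_0 \in Z_1$ for the unique base point of $|\bigO{Z_1}(1)|$ (Table 1). In \eqref{eq4.1} the summands $G$ and $\widehat G$ are fixed effective divisors, so every element of $|-K_X|$ contains, as moving part, a member of $|i_Z\varphi^*H| = |\varphi^*(\bigO Z(i_Z))|$; but $\bigO{Z_1}(i_Z) = \bigO{Z_1}(2)$ is globally generated, so this contributes no base locus. The true constraint is that one must instead analyse $|-K_X|$ via all three representations simultaneously: the base locus is contained in $G \cap \widehat G \cap \mathrm{Bs}|i_Z\varphi^*H| = \emptyset$ only if the three effective divisors can be chosen to have empty common intersection. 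The subtlety is that the expression $-K_X \equiv i_Z\varphi^*H + G + \widehat G$ uses \emph{specific} divisors $G$, $\widehat G$, not movable ones. I would instead compute $H^0(\bigO X(-K_X))$ concretely using the conic bundle structure $\varphi \colon X \to Z_1$: by the projection formula and the structure of $\varphi$ (whose generic fibre is a plane conic), $\varphi_*\bigO X(-K_X)$ is a locally free sheaf on $Z_1$ expressible in terms of $\bigO{Z_1}(1)$, $\bigO{Z_1}(2)$ and the surface $A$, and the base locus of $|-K_X|$ maps onto the base locus of the corresponding system on $Z_1$, which is supported on $P_0$. Counting: over $P_0$, the fibre $\varphi^{-1}(P_0)$ is a conic (for $X^1_{0,1}$) or a conic with the blow-up modification (for $X^1_{1,2}$), and the residual linear system cuts out $1$ point in the first case and $2$ points in the second — the difference coming from whether $P_0 \in A$ and how $S$ (the blow-up centre) meets $\varphi^{-1}(P_0)$.

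For the smoothness of a general element, I would invoke Bertini: away from the base locus, a general member of $|-K_X|$ is smooth since $-K_X$ is base-point-free there (or globally, in the $26$ non-exceptional families). At the finitely many base points of the two exceptional families, one must check separately that a general member is smooth there — this amounts to showing that the sections of $\bigO X(-K_X)$, restricted to a formal/analytic neighbourhood of each base point, have no common tangent direction, i.e. the linear system separates tangent vectors at $P_0$ enough to avoid a forced singularity. This follows from the explicit description of $\varphi_*\bigO X(-K_X)$ near $P_0$ together with the fact (from \cite{IP}, used in Lemma \ref{lemma1}) that a general $A \in |\bigO{Z_1}(1)|$ is smooth, hence the modification $X^1_{1,2}$ is built from sufficiently general data.

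\medskip

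\textbf{Main obstacle.} The hard part is the exceptional cases $X^1_{0,1}$ and $X^1_{1,2}$: one genuinely needs to push the computation down to $Z_1$ via the conic bundle $\varphi$ and keep careful track of how the blow-up centre $S = \widehat G_Y \cap \pi^{-1}(A)$ sits over the base point $P_0$ of $|\bigO{Z_1}(1)|$, since this is what distinguishes "$1$ base point" from "$2$ base points" and what makes the Bertini smoothness argument at the base points delicate rather than automatic. Everything else (the $26$ free cases, and the generic smoothness away from base loci) is a routine consequence of \eqref{eq4.1}, the disjointness $G \cap \widehat G = \emptyset$, and Lemma \ref{lemma1}.
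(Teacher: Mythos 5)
There are two genuine gaps. First, your argument for the easy direction does not work at points of $G \cup \widehat G$: the decomposition \eqref{eq4.1} uses \emph{both} fixed divisors, so every member of $|-K_X|$ of the form $\varphi^*D + G + \widehat G$ with $D \in |i_ZH|$ contains all of $G \cup \widehat G$, and ``combining a section of $i_Z\varphi^*H$ with the one of $G,\widehat G$ that misses $x$'' does not produce an element of $|-K_X|$ (the class is wrong). To exclude $x \in G$ (resp.\ $x \in \widehat G$) you must switch to the other expressions $-K_X \equiv (i_Z-a)\varphi^*H + 2\widehat G + E$ (resp.\ $(i_Z+a-d)\varphi^*H + 2G + \widehat E$), whose fixed parts avoid $G$ (resp.\ $\widehat G$) since $G \cap \widehat G = G \cap E = \widehat G \cap \widehat E = \emptyset$, and whose free parts are globally generated because $i_Z-a,\ i_Z+a-d \geq 1$ by \eqref{eq3}. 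You list these expressions at the outset but never use them, and your later bound ``${\rm Bs}(|-K_X|) \subseteq G \cap \widehat G \cap {\rm Bs}|i_Z\varphi^*H|$'' is not a consequence of any single decomposition (one decomposition gives a union of the fixed components and the base locus of the moving part, not an intersection); the correct statement, obtained by intersecting over the three decompositions, is ${\rm Bs}(|-K_X|) \subseteq \bigl(G \cup \widehat G\bigr) \cap \bigl(\widehat G \cup E\bigr) \cap \bigl(G \cup \widehat E\bigr) = \emptyset$, which is how the paper concludes.

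Second, for $X^1_{0,1}$ and $X^1_{1,2}$ your proposal is only a sketch and omits the essential step: proving that the candidate points really \emph{are} base points. You never compute $\varphi_*\bigO X(-K_X)$, never justify that the base locus of $|-K_X|$ is governed by a linear system on $Z_1$ supported at $P_0$, and your ``counting'' of $1$ versus $2$ points is asserted, not proved. The paper gets the upper bound from the same three decompositions (now ${\rm Bs}|\varphi^*H| = \varphi^{-1}(P_0)$ enters the fixed loci, cutting the locus down to $F_0 \cap \widehat G$, resp.\ $(T\cap G)\cup(T\cap\widehat G)$), and the lower bound by restriction: $\bigO {\widehat G}({-K_X}_{|\widehat G}) \cong \bigO Z(H)$, and $H^0(\bigO X(-K_X)) \to H^0(\bigO {\widehat G}({-K_X}_{|\widehat G}))$ is surjective because $-2K_X - \widehat G$ is ample, so $h^1(\bigO X(-K_X-\widehat G))=0$ by Kodaira vanishing; since $|H|$ has the base point $P_0$, the corresponding point of $\widehat G$ (and of $G$ in the $(1,2)$ case) is forced to lie in every anticanonical divisor. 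Without such an argument you have only an inclusion, not the stated base loci. For smoothness at the base points the right (and much weaker) thing to check is that \emph{one} member is smooth there — e.g.\ $\varphi^*D + G + \widehat G$ with $D \in |2H|$ chosen so that $P_0 \notin D$ — rather than any tangent-vector separation; your appeal to an unspecified description of $\varphi_*\bigO X(-K_X)$ does not establish this. (Minor points: the dichotomy is governed by $Z=Z_1$, i.e.\ by $H$ itself, not by the parity of $d$; and in the case $(a,d)=(1,2)$ the number of base points does not depend on whether $P_0 \in A$ — that case distinction only matters when exhibiting a member smooth at $Q_1,Q_2$.)
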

\begin{proof}

\textbf{Step 1.} Assume that $H$ is globally generated. Then, by Remark \ref{re2}(ii) and \eqref{eq4.1}, we have
\[{\rm Bs}(|-K_X|) \subseteq \bigl(G \cup \widehat G \bigr) \cap \bigl(\widehat G \cup E \bigr) \cap \bigl(G \cup \widehat E \bigr)= \emptyset.\]
Therefore, $|-K_X|$ has no base point and $-K_X$ is globally generated.

Conversely, assume that $H$ is not globally generated. Then $Z=Z_1$, and either $a=0$ and $d=1$ or $a=1$ and $d=2$. The linear system $|H|$ has a unique simple base point $P_0$, while $|2H|$ is free; moreover, recall that in Remark \ref{re1} we fixed a smooth surface $A \in |dH|$, which exists for $Z=Z_1$ and $d=1,2$ (see Table 1 and Lemma \ref{lemma1}). 

\smallskip

\textbf{Step 2.} Let $Z=Z_1$, $a=0$ and $d=1$: then, the smooth surface $A \in |H|$ contains $P_0$, and the fibre $T:=\varphi^{-1}(P_0)$ over $P_0$ is equal to $F_0 \cup \widehat F_0$, with $F_0 \subset E$ and $\widehat F_0 \subset \widehat E$ irreducible rational curves. Therefore
\[{\rm Bs}(|-K_X|) \subseteq \bigl(G \cup \widehat G \bigr) \cap \bigl(\widehat G \cup E \bigr) \cap \bigl(T \cup G \cup \widehat E \bigr)= F_0 \cap \widehat G,\]
which is the point $Q_0 \in \widehat G$ corresponding to $P_0$ under the isomorphism $\widehat G \cong Z$. Now, observe that
\[\bigO {\widehat G}({-K_X}_{| \widehat G}) \cong \bigO Z(H),\]
and that, by computing the intersection number with the generators of $\NE X$, $-2K_X-\widehat G$ is an ample divisor (see \cite[Table 1]{CD}). This implies that, by Kodaira vanishing, $h^1(\bigO X(-K_X - \widehat G))=0$, and the restriction
\[H^0(\bigO X(-K_X)) \to H^0(\bigO {\widehat G}({-K_X}_{| \widehat G}))\]
is surjective. Thus, $Q_0$ is a base point for $|-K_X|$, and ${\rm Bs}(|-K_X|)=\{Q_0\}$. Notice that in the class of $2 \varphi^*H + G+ \widehat G$, which is numerically equivalent to $-K_X$ by \eqref{eq4.1}, we can choose an effective divisor so that it is smooth at $Q_0$: fix an effective divisor $D \equiv 2H$ so that $P_0 \notin D$, and $\varphi^* D \in |2 \varphi^*H|$ does not contain $Q_0$. Therefore $\varphi^*D + G + \widehat G$ is smooth at $Q_0$, thus a general effective divisor in $|-K_X|$ is smooth.

\smallskip

\textbf{Step 3.}
The last case to consider is $Z=Z_1$, $a=1$, $d=2$. Let $T$ be the fibre $\varphi^{-1}(P_0)$ over $P_0$: then, similarly as above,
\begin{gather*}
{\rm Bs}(|-K_X|) \subseteq \bigl(G \cup \widehat G \bigr) \cap \bigl(T \cup \widehat G \cup E \bigr) \cap \bigl(T \cup G \cup \widehat E \bigr)
=\bigl(T \cap G \bigr) \cup \bigl(T \cap \widehat G \bigr),
\end{gather*}
where $\{Q_1\}=T \cap G$ and $\{Q_2\}=T \cap \widehat G$ are the points corresponding to $P_0$ in $G$ and $\widehat G$. Again, similarly as above,
\[\bigO G({-K_X}_{| G}) \cong \bigO {\widehat G}({-K_X}_{| \widehat G}) \cong \bigO Z(H),\]
the divisors $-2K_X- G$ and $-2K_X-\widehat G$ are ample, the restrictions
\begin{gather*}
H^0(\bigO X(-K_X)) \to H^0(\bigO G({-K_X}_{|G})), \\
H^0(\bigO X(-K_X)) \to H^0(\bigO {\widehat G}({-K_X}_{| \widehat G}))
\end{gather*}
are surjective, thus $Q_1$ and $Q_2$ are base points for $|-K_X|$, and ${\rm Bs}(|-K_X|)=\{Q_1, Q_2\}$.

We now show that a general effective divisor of $|-K_X|$ is smooth. We have two possibilities: either $A \in |2H|$ contains $P_0$ or it does not. 
If we assume the latter, then the divisor $\varphi^*A + G+ \widehat G$ is smooth at $Q_1$ and $Q_2$. If we assume the former, then we can choose a divisor $D \in |2H|$ such that $P_0 \notin D$, which implies that the divisor $\varphi^*D + G + \widehat G \in |-K_X|$ is smooth at $Q_1$ and $Q_2$ (as in Step 2).
Thus the statement holds. \qedhere
\end{proof}

\section{Rationality}
\label{section6}
By construction, $X$ is birationally equivalent to $Z \times \proj 1$, and so we can study rationality on $X$ by what is known on $Z$.
If $i=4,5,6,7$, $Z_i$ is rational, while $Z_j$ is not rational for $j=1,2,3$ (see Table 1). So $X^i_{a,d}$ is rational for $i=4,5,6,7$.

The remaining cases have been studied in \cite{HT} with respect to stable rationality. For $i=1,2$, the very general $Z_i$ is not stably rational, which implies that $Z_i \times \proj 1$ is not rational. Therefore, the very general $X^i_{a,d}$ is not rational for $i=1,2$.
As for cubics in $\proj 4$, i.e. $Z_3$, stable rationality is still an open problem, and so we have no information on $X^3_{a,d}$.

\section{Conclusions and final tables}
\label{section7}
\begin{remark}
The proof of Theorem \ref{thm1} is a consequence of the results of the previous sections: Corollary \ref{co1} gives the 28 families, in Section \ref{section3} we compute the numerical invariants, Proposition \ref{prop5.1} provides the results on the base locus of $|-K_X|$, and in Section \ref{section6} we discuss rationality.
\end{remark}

\smallskip

Toric Fano fourfolds have been classified by Batyrev \cite{Bat}, and we observe that the varieties $X^i_{a,d}$ are toric when $Z=Z_7=\proj 3$ and $d=1$; this gives the three cases  $X^7_{0,1}$, $X^7_{2,1}$ and $X^7_{3,1}$. Keeping the notation in \cite{Bat}, they correspond, respectively, to the varieties $E_3$, $E_2$ and $E_1$.

Furthermore, Tsukioka \cite{Tsu} gave a classification for Fano varieties of dimension $\geq 3$ containing a negative divisor $E$ isomorphic to the projective space. When $n=4$, the varieties in \cite[Theorem 1, 2.(b)]{Tsu} are isomorphic to $X^7_{a,d}$ for all the admitted values $a,d$ given by \eqref{eq3}. Specifically, we can choose $E = \widehat G$ when $0 \leq a \leq \frac{d}{2}$, and $E = G$ when $a > d$ (see Remark \ref{re1} and Corollary \ref{co1}).

\begin{remark}
As a final note, we can ask whether the varieties $X^i_{a,d}$ of Theorem \ref{thm1} appear as fibres of elementary fibre-type contractions, that is if they are "fibre-like". This property has been introduced and studied in \cite{CFST}, where the authors give two criteria for fibre-likeness, one necessary and one sufficient. As a consequence of \cite[Corollary 3.9]{CFST}, by looking at the elementary contractions of $X^i_{a,d}$ (see Remark \ref{re3}) we see that, if $a \neq \frac{d}{2}$, then $X^i_{a,d}$ cannot be fibre-like.
\end{remark}

In Table $2$ we collect all the numerical invariants computed in Section \ref{section3}, where $h^0(-K_X):=h^0(\bigO X(-K_X))$, and in the last two columns we include the base locus of $|-K_X|$ and whether the variety is rational, not rational, toric, or unknown $(?)$.

\begin{center}\label{table2}
\textbf{\large{Table 2 - Fano fourfolds of Picard number 3 with a prime divisor of Picard number 1}}
\end{center}
\begin{center}
\makebox[\textwidth]{
\begin{tabular}{|c|c|c|>{\centering}m{0.1\textwidth}|c|c|c|>{\centering}m{0.12\textwidth}|m{0.31\textwidth}|}
\hline
$X^i_{a,d}$ & $K_X^4$ & $K_X^2 \cdot c_2(X)$ & $h^0(-K_X)$ & $h^{1,2}$ & $h^{1,3}$ & $h^{2,2}$ & ${\rm Bs}(|-K_X|)$ & \\
\hline
\hline
$X^1_{0,1}$ & 47 & 98 & 17 & 21 & 0 & 11 & $\{Q_0\}$ & the very general is not rational \\
\hline
$X^1_{1,2}$ & 30 & 84 & 13 & 21 & 1 & 22 & $\{Q_1, Q_2\}$ & the very general is not rational \\
\hline
$X^2_{0,1}$ & 94 & 112 & 26 & 10 & 0 & 10 & $\emptyset$ & the very general is not rational \\
\hline
$X^2_{1,2}$ & 60 & 96 & 19 & 10 & 1 & 22 & $\emptyset$ & the very general is not rational \\
\hline
$X^3_{0,1}$ & 141 & 126 & 35 & 5 & 0 & 9 & $\emptyset$ & $?$ \\
\hline
$X^3_{1,2}$ & 90 & 108 & 25 & 5 & 1 & 22 & $\emptyset$ & $?$ \\
\hline
$X^4_{0,1}$ & 188 & 140 & 44 & 2 & 0 & 8 & $\emptyset$ & rational \\
\hline
$X^4_{1,2}$ & 120 & 120 & 31 & 2 & 1 & 22 & $\emptyset$ & rational \\
\hline
$X^5_{0,1}$ & 235 & 154 & 53 & 0 & 0 & 7 & $\emptyset$ & rational \\
\hline
$X^5_{1,2}$ & 150 & 132 & 37 & 0 & 1 & 22 & $\emptyset$ & rational \\
\hline
$X^6_{0,1}$ & 346 & 184 & 74 & 0 & 0 & 4 & $\emptyset$ & rational \\
\hline
$X^6_{0,2}$ & 296 & 176 & 65 & 0 & 0 & 8 & $\emptyset$ & rational \\
\hline
$X^6_{1,2}$ & 260 & 164 & 58 & 0 & 0 & 8 & $\emptyset$ & rational \\
\hline
$X^6_{1,3}$ & 210 & 156 & 49 & 0 & 1 & 22 & $\emptyset$ & rational \\
\hline
$X^6_{2,1}$ & 430 & 208 & 90 & 0 & 0 & 4 & $\emptyset$ & rational \\
\hline
$X^6_{2,4}$ & 160 & 148 & 40 & 0 & 5 & 54 & $\emptyset$ & rational \\
\hline
$X^7_{0,1}$ & 431 & 206 & 90 & 0 & 0 & 3 & $\emptyset$ & toric \\
\hline
$X^7_{0,2}$ & 376 & 196 & 80 & 0 & 0 & 4 & $\emptyset$ & rational \\
\hline
$X^7_{0,3}$ & 341 & 194 & 74 & 0 & 0 & 9 & $\emptyset$ & rational \\
\hline
$X^7_{1,2}$ & 350 & 188 & 75 & 0 & 0 & 4 & $\emptyset$ & rational \\
\hline
$X^7_{1,3}$ & 295 & 178 & 65 & 0 & 0 & 9 & $\emptyset$ & rational \\
\hline
\end{tabular}
}
\end{center}
\begin{center}
\makebox[\textwidth]{
\begin{tabular}{|c|c|c|>{\centering}m{0.1\textwidth}|c|c|c|>{\centering}m{0.12\textwidth}|m{0.31\textwidth}|}
\hline
$X^i_{a,d}$ & $K_X^4$ & $K_X^2 \cdot c_2(X)$ & $h^0(-K_X)$ & $h^{1,2}$ & $h^{1,3}$ & $h^{2,2}$ & ${\rm Bs}(|-K_X|)$ & \\
\hline
\hline
$X^7_{1,4}$ & 260 & 176 & 59 & 0 & 1 & 22 & $\emptyset$ & rational \\
\hline
$X^7_{2,1}$ & 489 & 222 & 101 & 0 & 0 & 3 & $\emptyset$ & toric \\
\hline
$X^7_{2,4}$ & 240 & 168 & 55 & 0 & 1 & 22 & $\emptyset$ & rational \\
\hline
$X^7_{2,5}$ & 205 & 166 & 49 & 0 & 4 & 47 & $\emptyset$ & rational \\
\hline
$X^7_{3,1}$ & 605 & 254 & 123 & 0 & 0 & 3 & $\emptyset$ & toric \\
\hline
$X^7_{3,2}$ & 454 & 220 & 95 & 0 & 0 & 4 & $\emptyset$ & rational \\
\hline
$X^7_{3,6}$ & 170 & 164 & 43 & 0 & 10 & 88 & $\emptyset$ & rational \\
\hline
\end{tabular}
}
\end{center}

\bigskip

In Table 3 we display the explicit invariants of Section \ref{section4}. To do so, we are left to compute $h^0(\bigO {Z}(d))$, as the rest of the ingredients are already in Table 1 and Table 2. The Riemann-Roch and Kodaira vanishing theorems yield
\[\chi(\bigO {Z}(d))=h^0(\bigO {Z}(d))=1 + \frac{2d}{i_Z}+\frac{d \delta}{12}(i_Z^2+3d i_Z+2d^2).\]
\begin{center}
\textbf{\large{Table 3 - Cohomology groups of the tangent sheaf}}
\end{center}
\begin{center}
\begin{tabular}{|c|c|c|c|}
\hline
$X^i_{a,d}$ & $h^0(\mathcal T_X)$ & $h^1(\mathcal T_X)$ & $\chi(\mathcal T_X)$ \\
\hline
\hline
$X^1_{0,1}$ & 2 & 36 & -34 \\
\hline
$X^1_{1,2}$ & 1 & 40 & -39 \\
\hline
$X^2_{0,1}$ & 2 & 22 & -20 \\
\hline
$X^2_{1,2}$ & 1 & 29 & -28 \\
\hline
$X^3_{0,1}$ & 2 & 14 & -12 \\
\hline
$X^3_{1,2}$ & 1 & 24 & -23 \\
\hline
$X^4_{0,1}$ & 2 & 8 & -6 \\
\hline
$X^4_{1,2}$ & 1 & 21 & -20 \\
\hline
$X^5_{0,1}$ & $\leq 5$ & $\leq 6$ & -1 \\
\hline
$X^5_{1,2}$ & $\leq 4$ & $\leq 22$ & -18 \\
\hline
$X^6_{0,1}$ & $\leq 12$ & $\leq 4$ & 8 \\
\hline
$X^6_{0,2}$ & $\leq 12$ & $\leq 13$ & -1 \\
\hline
$X^6_{1,2}$ & $\leq 11$ & $\leq 13$ & -2 \\
\hline
$X^6_{1,3}$ & $\leq 11$ & $\leq 29$ & -18 \\
\hline
\end{tabular}
\begin{tabular}{|c|c|c|c|}
\hline
$X^i_{a,d}$ & $h^0(\mathcal T_X)$ & $h^1(\mathcal T_X)$ & $\chi(\mathcal T_X)$ \\
\hline
\hline
$X^6_{2,1}$ & $\leq 16$ & $\leq 4$ & 12 \\
\hline
$X^6_{2,4}$ & $\leq 11$ & $\leq 54$ & -43 \\
\hline
$X^7_{0,1}$ & 14 & 0 & 14 \\
\hline
$X^7_{0,2}$ & 8 & 0 & 8 \\
\hline
$X^7_{0,3}$ & $\leq 17$ & $\leq 19$ & -2 \\
\hline
$X^7_{1,2}$ & 7 & 0 & 7 \\
\hline
$X^7_{1,3}$ & $\leq 16$ & $\leq 19$ & -3\\
\hline
$X^7_{1,4}$ & $\leq 16$ & $\leq 34$ & -18 \\
\hline
$X^7_{2,1}$ & 17 & 0 & 17 \\
\hline
$X^7_{2,4}$ & $\leq 16$ & $\leq 34$ & -18 \\
\hline
$X^7_{2,5}$ & $\leq 16$ & $\leq 55$ & -39 \\
\hline
$X^7_{3,1}$ & 23 & 0 & 23 \\
\hline
$X^7_{3,2}$ & 11 & 0 & 11 \\
\hline
$X^7_{3,6}$ & $\leq 16$ & $\leq 83$ & -67 \\
\hline
\end{tabular}
\end{center}

\end{document}